\tikzstyle{vertex}=[circle,draw=black,fill=black,inner sep=0,minimum size=3pt,text=white,font=\footnotesize]
\newtheorem{definition}{Definition}[section]
\newtheorem{theorem}[definition]{Theorem}
\newtheorem{lemma}[definition]{Lemma}
\numberwithin{equation}{section}
\newcommand{\ignore}[1]{}
\providecommand{\keywords}[1]
{
  \small	
  \textbf{{Keywords}} #1
}
\newcommand*\patchAmsMathEnvironmentForLineno[1]{%
  \expandafter\let\csname old#1\expandafter\endcsname\csname #1\endcsname
  \expandafter\let\csname oldend#1\expandafter\endcsname\csname end#1\endcsname
  \renewenvironment{#1}%
     {\linenomath\csname old#1\endcsname}%
     {\csname oldend#1\endcsname\endlinenomath}}%
\newcommand*\patchBothAmsMathEnvironmentsForLineno[1]{%
  \patchAmsMathEnvironmentForLineno{#1}%
  \patchAmsMathEnvironmentForLineno{#1*}}%
\def\inst#1{$^{#1}$}
\begin{document}

\title{Linear Secret-Sharing Schemes for $k$-uniform access structures }

\author{Younjin Kim\inst{1}\thanks{Y.K. was supported by Basic Science Research Program through the National Research Foundation of Korea(NRF) funded by the Ministry of Education (Grant No. 2017R1A6A3A04005963).}, Jihye Kwon\inst{2}, and Hyang-Sook Lee\inst{3}\thanks{H.L. was supported by Basic Science Research Program through the National Research Foundation of Korea(NRF) funded by the Ministry of  Education (Grant No. 2019R1A6A1A11051177).}}

\maketitle

\begin{center}
{\footnotesize
\inst{1} 
Institute of Mathematical Sciences, Ewha Womans University, Seoul, South Korea \\
\texttt{younjinkim@ewha.ac.kr}
\\\ \\
\inst{2} 
Department of Mathematics, Ewha Womans University, Seoul, South Korea \\
\texttt{jhkwon74@ewhain.net}
\\\ \\

\inst{3} 
Department of Mathematics, Ewha Womans University, Seoul, South Korea \\
\texttt{hsl@ewha.ac.kr}
\\\ \\

}
\end{center}

\begin{abstract}
A {\it $k$-uniform hypergraph} $\mathcal{H}=(V, E)$ consists of a
set $V$ of vertices and a set $E$ of hyperedges ($k$-hyperedges), which is a family of $k$-subsets of $V$.  
A {\it forbidden $k$-homogeneous  (or forbidden $k$-hypergraph)} access structure $\mathcal{A}$ is represented by  a $k$-uniform hypergraph $\mathcal{H}=(V,  E)$ and has the following property: a set of vertices (participants) can reconstruct the secret value from their shares in the secret sharing scheme if they are connected by a $k$-hyperedge or their size is at least $k+1$.  A forbidden $k$-homogeneous access structure has been studied by many authors ~\cite{AA, ABFNP, BKN,SS} under the terminology of $k$-uniform access structures. In this paper, we provide efficient constructions 
 on the total share size of linear secret sharing schemes for sparse and dense  $k$-uniform access structures   for a constant $k$ using the hypergraph decomposition technique and  the monotone span programs.\\
 
 \keywords{secret sharing schemes, graph access structures, hypergraph decomposition}
 \end{abstract}

\section{Introduction}

A secret sharing scheme is a tool used in many cryptographic protocols. A secret sharing scheme involves a dealer who has a secret, a set of $n$ participants, and a collection $\mathcal{F}$ of subsets of participants defined as the access structure. A secret sharing scheme for $\mathcal{F}$ is a method by which the dealer distributes shares of a secret value $k$ to the set of $n$ participants such that any subset in $\mathcal{F}$ can reconstruct the secret value $k$ from their shares and any subset not in $\mathcal{F}$ cannot reveal any information about the secret value $k$. When any subset in $\mathcal{F}$ can reconstruct the secret value $k$ from their shares by using a linear mapping, the secret sharing scheme is called a {\it linear secret sharing scheme}. The {\it qualified subsets} in the secret sharing scheme is defined as the subsets of participants who can reconstruct the secret value $k$ from their shares. A collection of  qualified subsets of participants called the {\it access structure} of the secret sharing scheme. In  other words, the {\it unqualified subsets}  or {\it forbidden subsets} in the secret sharing scheme are defined as the subsets of participants who cannot have any information about  the secret value $k$ from their shares. \\

In 1979, Shamir~\cite{AS} introduced a $(t,n)$-{\it threshold secret sharing scheme} as the first works about the secret sharing, in which the qualified subsets  are formed by all the subsets with at least $t$ participants in a set of $n$ participants and the size of each share is  the size of the secret. It means that $(t,n)$-{\it threshold secret sharing scheme} is determined by the basis consisting of all subsets with exactly $t$ different participants from a set of $n$ participants. There have been further constructions of secret sharing schemes for any access structures, and in 1987, Ito, Saito, and Nishizeki~\cite{ISN} constructed secret sharing schemes for general access structures. However, their constructions are very inefficient because the size of the shares much larger than the size of the secret in general. Later, in 1988, Benaloh and Leichter~\cite{BL} constructed a much more efficient  secret sharing scheme for general access structure based on monotone formulate than the scheme of Ito, Saito, and Nishizeki~\cite{ISN}.  \\

All the above secret-sharing schemes are linear in which the secret is an element of the field and each share is a vector over the field whose each coordinate is expressed as a linear  combination of the secret, and the coordinates of the random strings which are taken from  some finite field. In 1993, Karchmer and Wigderson~\cite{KW} introduced the monotone span programs from which the linear secret sharing schemes can be constructed.  They obtained that every monotone span program over finite fields implies a linear secret sharing scheme for an access structure consisting of all sets accepted by the monotone span program.
Later, in 1993, Bertilsson and Ingemarsson~\cite{BI} generalized their linear schemes derived from the monotone span programs to the multilinear schemes  based on the generalized monotone span programs in which  the secret is some vector over the  field.  The best-known lower bound on the total share size of secret sharing schemes realizing a general access structure was given by Csirmaz~\cite{LC1} in 1997. Also,  the best-known upper bound on the total share size of secret sharing scheme  realizing a general access structure was given by  Applebaum, Beimel, Farr\'as, Nir, and Peter~\cite{ABFNP} in 2020, which
 is highly inefficient with the  size $2^{0.637n}$. \\
 

An access structure is defined as a {\it graph access structure} determined by a graph $G=(V, E)$ if  a pair of  vertices connected by an edge can reconstruct the secret and the set of non-adjacent vertices in the graph $G$ does not get any information on the secret. The motivation for studying graph secret sharing schemes is that they are simpler than secret sharing schemes for general access structures and  later generalized to general access structures. Secret sharing schemes realizing graph access structures were studied in many papers~\cite{BFM,BSSV,BD,BS,CSGV,LC2,LC3,LC4,CL,CT,MD,FKMP} Also, many authors were interested in forbidden graph access structures as specific families of access structures.
 An access structure is defined as a {\it forbidden graph access structure} determined by a graph $G=(V, E)$ if  a pair of  vertices can reconstruct the secret if it is connected by an edge or its size is at least $3$. In 2014, Beimel, Ishai, Kumaresan, and Kushilevitz~\cite{BIKK} constructed a secret sharing scheme realizing all forbidden graph access structures with the total share size $O(n^{3/2})$. Later, in
 2015, a linear secret sharing scheme 
for all forbidden graph access structures was given by
Gay, Kerenidis, and Wee~\cite{GKW}  in which  
 the total share size is $O(n^{3/2})$.  Recently, in 2017, Liu, Vaikuntanathan, and Wee~\cite{LVW} proved that every forbidden graph access structure could be realized by a non-linear secret sharing scheme with the total share size $n^{1+o(1)}$. For the forbidden dense graph access structures  having at least ${{n} \choose {2}}-n^{1+\beta}$ edges, where $0 \leq \beta < {\frac{1}{2}}$, Beimel, Farras, and Peter~\cite{BFP} constructed a linear secret sharing scheme  with the total share size $O(n^{7/6+2\beta/3})$. Later, in 2020, Beimel, Farras, Mintz, and Peter~\cite{BFMP} provided efficient constructions on the share size of linear secret schemes for forbidden sparse and dense  graph access structures based on the monotone span programs.\\


A hypergraph is a generalization of a graph in which hyperedges may connect  more than two vertices. A  $k$-uniform hypergraph is a hypergraph in which each hyperedge has exactly $k$ vertices.
 An access structure is defined as a {\it $k$-hypergraph access structure} determined by a $k$-uniform hypergraph $\mathcal{H}$ if  the set of  vertices connected by a $k$-hyperedge can reconstruct  the secret and the set of non-adjacent vertices in the hypergraph $H$ does not get any information on the secret. The access structures of these schemes are also called {\it $k$-homogeneous}.  For example, graph access structures are $2$-homogeneous access structures. A $k$-homogeneous access structure is determined by the family of minimal qualified subsets with exactly $k$ different participants or $k$-uniform hypergraphs $\mathcal{H}(V,E)$, where $V$ is a vertex set, and $E \subseteq 2^V$ is an edge set of hyperedges of cardinality $k$.  
The secret sharing schemes for $k$-homogeneous access structures have been constructed by many authors based on various techniques.  In 1990, Benaloh and Leichter~\cite{BL} constructed a secret sharing scheme for the $k$-homogeneous access structure with total share size $O(n^k/ \log n)$. 
 For the dense $k$-homogeneous access structure,  a much more efficient  secret sharing scheme was constructed by 
Beimel, Farras, and Mintz~\cite{BFM}  in 2012, in which the total share size is  $\tilde{O}(2^kk^kn^{2+\beta})$.
 Recently, in 2020, Beimel and Farras~\cite{BF1} constructed a secret sharing scheme for almost all $k$-homogeneous access structures with maximum share size $2^{\tilde{O}(\sqrt{k\log n})}$. \\

An access structure is defined as {\it forbidden $k$-homogeneous  (or forbidden $k$-hypergraph)} determined by a hypergraph $\mathcal{H}$ if  a set of vertices can reconstruct the secret if it is connected by a $k$-hyperedge or its size is at least $k+1$. We study the complexity of realizing a forbidden $k$-hypergraph access structure by linear secret sharing schemes. A forbidden $k$-homogeneous access structure has been studied by many authors~\cite{AA, ABFNP, BKN,SS} under the terminology of $k$-uniform access structures. Recently, in 2018, Applebaum and Arkis constructed an efficient secret sharing scheme for  $k$-uniform access structures  using  multiparty Conditional Disclosure of Secrets (CDS). Later, in 2018, Beimel and Peter~\cite{BP} obtained that every $k$-uniform access structure with a binary secret could be realized by a secret sharing scheme in which the share size $ \min  \{ (O(n/k))^{(k-1)/2}, O(n\cdot 2^{n/2})\} $. By improving their result, in 2019, Applebaum, Beimel, Farras, Nir, and Peter~\cite{ABFNP} obtained that every $k$-uniform access structure with a binary secret could be realized by a secret sharing scheme in which the share size  is $ 2^{\tilde{O}(\sqrt{k\log n})}$ by combining the CDS protocol and transformations. In 2020, Beimel, Farras, Mintz, and Peter~\cite{BFMP} obtained the lower bound on the max share size for sharing a one-bit secret in every linear secret sharing scheme realizing $k$-uniform access structures using CDS protocol.
In this paper, we provide efficient constructions 
 on the share size of linear secret sharing schemes for sparse and dense  $k$-uniform access structures (or forbidden $k$-homogeneous access structures)  for a constant $k$ using the hypergraph decomposition technique and  the monotone span programs
 as follows.
 
 \newpage


\begin{theorem}\label{main:mainthm1}  Let $\Gamma$ be a sparse  $k$-uniform access structure  whose size is at  most $n^{1+\beta}$, where $0 \leq \beta <1$.
Then there exists a linear secret sharing scheme for  an access structure $\Gamma$  with the total share size 

$$O\left(n^{\frac{k^2-3k+2}{k^2-2k+2}+ \left(\frac{k^2-3k+3}{k^2-2k+2}\right)\beta} \log^{k+1} n \right).$$ \\
\end{theorem}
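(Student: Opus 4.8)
The plan is to realize $\Gamma$ as a union of simpler access structures, realize each by a monotone span program, and then add the schemes with independent randomness, so that share sizes add while the access structure realized by the sum is exactly the union of the parts. Concretely, I would write $\Gamma=\Gamma_{\ge k+1}\cup\bigcup_i\Gamma_i$, where $\Gamma_{\ge k+1}$ consists of all sets of size at least $k+1$ and each $\Gamma_i\subseteq\Gamma$ handles a portion of the hyperedges. Correctness of this composition is immediate: a set is qualified in the sum iff it is qualified in some summand, and since each summand is contained in $\Gamma$, no forbidden set is ever qualified. The part $\Gamma_{\ge k+1}$ is a $(k+1,n)$-threshold structure, realized linearly by Shamir's scheme with total share size $O(n\log n)$, a lower-order term. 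So the task reduces to covering the edge set $E$ by cheap pieces $\Gamma_i$ whose union, together with the large sets, recovers every hyperedge.

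The core building block I would use is a recursion on the uniformity $k$ through vertex \emph{links}. For a vertex $v$, let $L_v=\{e\setminus\{v\}:v\in e\in E\}$ be the $(k-1)$-uniform link of $v$, and let $\Gamma_v=\{B:\ v\in B,\ B\setminus\{v\}\in\langle L_v\rangle\}$ be the star structure at $v$, where $\langle L_v\rangle$ is the forbidden $(k-1)$-homogeneous structure of the link. I would realize $\Gamma_v$ by drawing a random field element $r$, giving it to $v$, and recursively realizing $\langle L_v\rangle$ on the neighborhood $N(v)$ with secret $s-r$ by a monotone span program. A set then recovers $s$ iff it contains $v$ (to get $r$) and its remainder is qualified in the $(k-1)$-structure (to get $s-r$); one checks directly that $\Gamma_v\subseteq\Gamma$ and that its minimal qualified sets of size $k$ are exactly the hyperedges through $v$. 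This reduces $k\mapsto k-1$ at additive cost $O(\log n)$ per center, and its iteration is what accrues the $\log^{k+1}n$ factor. At the bottom the complete $k$-partite case (``at least one vertex from each part''), a monotone formula with $\sum_i|A_i|$ leaves, is realized with total share size linear in its number of vertices.

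For the decomposition I would fix a degree threshold $d$ and split $V=H\cup L$ with $H=\{v:\deg(v)\ge d\}$; since $\sum_v\deg(v)=k|E|\le k n^{1+\beta}$, we have $|H|\le k n^{1+\beta}/d$. Every edge either meets $L$, in which case it is captured by the star at one of its low-degree vertices (each such star costing $O(\log n)+S_{k-1}(\deg v,n)$ with $\deg v<d$), or lies entirely inside $H$, in which case I recurse on the same $k$ but on the far smaller vertex set $H$. Writing $S_k(m,n)$ for the resulting total share size, this gives a two-parameter recurrence of the shape
\[
S_k(m,n)\ \le\ S_k\!\big(m,\,k m/d\big)\ +\ n\cdot S_{k-1}(d,n)\ +\ O(n\log n),
\]
to be optimized over $d$. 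Unwinding the uniformity from $k$ down to the base case and choosing the degree thresholds at the $\Theta(k)$ levels so as to equalize the two contributions should yield a fixed point whose exponent has denominator $(k-1)^2+1=k^2-2k+2$ and numerator $(k-1)(k-2)+(k^2-3k+3)\beta$, i.e. exactly the claimed bound, with one logarithmic factor per level.

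The hard part will be the combinatorial optimization in this last step: I must show that the degree thresholds can be chosen simultaneously across all levels so that the high-degree residual term $S_k(m,km/d)$ and the low-degree star term $n\cdot S_{k-1}(d,n)$ balance to the stated exponent, and that the recursion bottoms out cleanly (when the surviving vertex set is small enough that its hypergraph is essentially complete and a threshold scheme applies) without losing the bound. A second, subtler point is maintaining the invariant that every piece stays inside $\Gamma$: the star/link construction must never qualify a non-edge $k$-set, which forces each recursive $(k-1)$-scheme to itself be forbidden-$(k-1)$-homogeneous rather than merely to cover the link edges, so the correctness and the efficiency arguments have to be carried through the recursion together.
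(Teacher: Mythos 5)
Your proposal takes a genuinely different route from the paper, but it has a gap that I do not believe can be repaired within the framework you set up. The paper's entire saving comes from a single ``batched'' monotone span program (its Lemmas 5.2--5.3, built from the interpolation construction of Lemma 5.1): for a $k$-partite hypergraph in which every vertex of $A_k$ has degree at most $d$, one scheme realizes \emph{all} edges simultaneously at cost $m_k+(d+1)^{k-1}(m_1+\cdots+m_{k-1})$, because each vertex of $A_1,\dots,A_{k-1}$ encodes its entire incidence pattern in a single $(d+1)^{k-1}$-dimensional space of polynomials. This is then combined with three decompositions (a random $k$-coloring giving $O(\log n)$ $k$-partite pieces, a bucketing of $A_k$ by degree into $\log n$ classes, and a random refinement of $A_1,\dots,A_{k-1}$ into blocks of size $n^{\alpha}$ to equalize degrees), and the exponent $\frac{k^2-3k+2}{k^2-2k+2}+\frac{k^2-3k+3}{k^2-2k+2}\beta$ falls out of optimizing $\alpha$. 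Your star/link recursion has no counterpart of this batching step, and that is fatal to the exponent.

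Concretely: test your recurrence at $k=2$. The link of a vertex is $1$-uniform, its forbidden structure costs $\Theta(n)$ if realized on all $n$ participants, so your term $n\cdot S_{k-1}(d,n)$ is already $\Omega(n^2)$; if instead you restrict each link scheme to $N(v)$ (which, as you note, is what privacy permits), the star at $v$ degenerates to handing $s-r_v$ to every neighbour of $v$, i.e.\ the trivial per-edge scheme, and the low-degree contribution is $\sum_{v}\deg(v)=\Theta(|E|)=\Theta(n^{1+\beta})$. Either way you do not beat the trivial bound, let alone reach $n^{\beta/2}$ (the theorem's claim at $k=2$). Since each star uses fresh independent randomness $r_v$, no work is shared between stars, so iterating the link recursion only multiplies this loss at higher $k$. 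The final step of your argument --- that equalizing the two terms of the recurrence ``should yield'' the denominator $k^2-2k+2$ --- is asserted rather than derived, and the $k=2$ computation shows the recurrence cannot produce it. To salvage the approach you would need to replace the per-vertex star by something that amortizes over many vertices of comparable degree at once, which is precisely the polynomial span-program construction the paper builds; the degree-threshold splitting and the union composition in your write-up are fine, but they are the easy part.
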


\begin{theorem}\label{main:mainthm2}
Let $\Gamma$ be a dense  $k$-uniform access structure whose size is at least $ {{n}\choose{k}}- n^{1+\beta}$, where $0 \leq \beta <1$.
Then there exists a linear secret sharing scheme for  an access structure $\Gamma$  with the total share size 

$$O\left(n^{\frac{k^2-3k+2}{k^2-2k+2}+ \left(\frac{k^2-3k+3}{k^2-2k+2}\right)\beta} \log^{k+1} n \right).$$ \\
\end{theorem}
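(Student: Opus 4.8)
The plan is to deduce Theorem~\ref{main:mainthm2} from Theorem~\ref{main:mainthm1} by passing to the complement hypergraph and dualizing the combination step. Write $\Gamma$ for the given dense structure and let $E \subseteq \binom{[n]}{k}$ be its set of $k$-edges, so that a set is qualified exactly when it has size at least $k+1$, or it has size $k$ and lies in $E$. Set $\overline{E} = \binom{[n]}{k} \setminus E$; by hypothesis $|\overline{E}| \le n^{1+\beta}$, so the complement hypergraph is sparse and is governed by the same sparsity parameter as in Theorem~\ref{main:mainthm1}. The first observation I would record is that, because every set of size at least $k+1$ is automatically qualified, the whole structure $\Gamma$ is determined at ``level $k$'': a $k$-set is qualified iff it avoids $\overline{E}$. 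Thus it suffices to realize the monotone structure that qualifies every set of size $\ge k+1$ together with every $k$-set outside $\overline{E}$.

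The combinatorial engine is the decomposition technique that underlies Theorem~\ref{main:mainthm1}, but applied to $\overline{E}$ rather than to $E$, and with the union of sub-schemes replaced by intersection. Concretely, I would decompose $\overline{E} = \bigcup_j \overline{E}_j$ into the same small family of structured pieces that the sparse construction produces, and for each $j$ build a linear scheme for the structure
\[
\Delta_j = \{A : |A| \ge k+1\} \cup \{A : |A| = k,\ A \notin \overline{E}_j\}.
\]
Each $\Delta_j$ qualifies all large sets and all $k$-sets except those in one structured piece; it is the complement, at level $k$, of a structured hypergraph, which is why I expect it to admit a small monotone span program. These schemes are then combined by intersection: splitting the secret additively as $s = \sum_j z_j$ and handing each participant its shares from a scheme for $\Delta_j$ sharing $z_j$. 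A set recovers $s$ iff it recovers every $z_j$, i.e.\ iff it lies in $\bigcap_j \Delta_j$; and $\bigcap_j \Delta_j$ qualifies exactly the sets of size $\ge k+1$ together with the $k$-sets lying in no $\overline{E}_j$, that is, the $k$-sets in $E$. Hence the intersection realizes $\Gamma$, privacy being inherited from the additive split: a set outside even one $\Delta_j$ learns nothing about the corresponding $z_j$, hence nothing about $s$. The total share size is the sum of the share sizes of the $\Delta_j$, exactly mirroring the summation over pieces in Theorem~\ref{main:mainthm1}.

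To obtain the stated bound I would verify that realizing a complement piece $\Delta_j$ by a monotone span program costs the same order as realizing a single structured piece in the sparse construction: for a clique-type piece $\overline{E}_j$ supported on a vertex set $W_j$, a $k$-set is qualified in $\Delta_j$ precisely when it is not contained in $W_j$, which is a threshold-completion condition realizable with a number of rows polynomial in $\log n$ per vertex, and the other piece types are handled analogously. Since the decomposition of $\overline{E}$ uses the same parameters (number of pieces and their sizes) as the decomposition of a sparse hypergraph with at most $n^{1+\beta}$ edges in Theorem~\ref{main:mainthm1}, summing the per-piece costs reproduces the identical exponent $\frac{k^2-3k+2}{k^2-2k+2} + \big(\frac{k^2-3k+3}{k^2-2k+2}\big)\beta$ and the $\log^{k+1} n$ factor.

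The hard part will be the construction of the cheap monotone span programs for the complement pieces $\Delta_j$ and the proof that they glue correctly under intersection while respecting the ``all large sets qualified'' requirement. In the union-based sparse argument one only needs each piece to be realizable; in the intersection-based dense argument each sub-scheme must simultaneously (i) qualify every set of size $\ge k+1$, so that no large set is accidentally excluded when we intersect, and (ii) forbid precisely the structured sub-family $\overline{E}_j$ at level $k$. Balancing these two demands with a span program whose size matches the sparse piece---so that the final summation yields the same exponent rather than an extra factor---is where the main technical work lies; the remaining verification (monotonicity of each $\Delta_j$, together with correctness and privacy of the additive composition) is routine.
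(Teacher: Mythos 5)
Your set-theoretic reduction is fine: $\bigcap_j \Delta_j = \Gamma$ holds, and additive splitting of the secret does give a sound AND-composition. But the composition mechanism is not the paper's, and it is the source of a genuine quantitative gap. The paper never switches to intersection: it keeps OR/union composition at every level (decomposing $E$ itself into $O(\log n)$ $k$-partite pieces, then degree classes of $\overline{E}$, then sub-parts $A_{1,i_1},\dots,A_{k-1,i_{k-1}}$ of size $n^{\alpha}$), and the complementation enters only \emph{inside} the per-piece monotone span program: Lemma~\ref{r:lem3} builds, from the linear spaces of Lemma~\ref{r:lem1} applied to the local non-edges, an MSP that directly accepts the transversal $k$-sets of a \emph{small} sub-universe that avoid those non-edges, at cost $2m_k+(d+1)^{k-1}(k-1)n^{\alpha}$. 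Crucially each such sub-scheme gives shares only to the participants in its own support.

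In your intersection scheme every $\Delta_j$ must qualify all sets of size $\ge k+1$ and all $k$-sets outside one structured piece, so essentially every participant lies in a minimal qualified set of every $\Delta_j$ and must receive a nonempty share in every sub-scheme. The finest level of the sparse decomposition has $l^{k-1}=\Theta\bigl((n^{1-\alpha}\log n)^{k-1}\bigr)$ pieces per degree class, so your total share size is at least $n\cdot n^{(k-1)(1-\alpha)}$ up to logarithms, which overshoots the claimed bound by roughly a factor of $n$ (for $k=2$ and small $\beta$ the target is $\tilde O(n^{\beta/2})$ versus your $\ge n^{2-\beta/2}$). The sentence ``the total share size is the sum of the share sizes of the $\Delta_j$, exactly mirroring the summation over pieces in Theorem~\ref{main:mainthm1}'' is where this hides: the summands are not comparable, because a piece of the sparse construction costs only $|A_k|+(d+1)^{k-1}\cdot O(kn^{\alpha})$ on its support, while a scheme for $\Delta_j$ costs at least $n$. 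Retreating to intersecting only polylogarithmically many coarse pieces does not rescue the plan, since each coarse $\Delta_{t,s}$ then still has to qualify the non-transversal and out-of-support $k$-sets, which the cheap partite MSPs do not do. So the step you flag as ``the hard part'' is not merely technical: with intersection over the full family of pieces the stated bound cannot be reached, and the correct route is the paper's union of locally supported dense-piece span programs.
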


Our paper is organized as follows. In Section $2$, 
 we introduce the definition of Secret Sharing Scheme and  two interesting secret sharing schemes which are Shamir's Threshold Secret Sharing Scheme and  Monotone Span Programs. In Section $3$, we introduce several access structures related to this paper.
 In Section $4$ and Section $5$, we present the results and lemmas, which are necessary for proving our main theorems.
In Section $6$ and Section $7$, we give the proof of  Theorem~\ref{main:mainthm1} and Theorem~\ref{main:mainthm2}.

\section{Secret Sharing Scheme}


 \noindent A secret sharing scheme involves a dealer who has a secret, a set of $n$ participants, and a collection $\mathcal{F}$ of subsets of participants defined as the access structure. A secret sharing scheme for $\mathcal{F}$ is a method by which the dealer distributes shares of a secret value $k$ to the set of $n$ participants such that any qualified subset in $\mathcal{F}$ can reconstruct the secret value $k$ from their shares and any unqualified subset not in $\mathcal{F}$ cannot reveal any information about the secret value $k$. By using the entropy function we define the secret sharing scheme. \\
 
\noindent For the given random variable $X$, we define the entropy of $X$ as 
$$H(X) = - \sum Pr(X=x) \log Pr(X=x),$$
\noindent where the sum is taken over all values $x$ and $Pr(X=x) > 0$. For the two random variables $X$ and $Y$, we define the conditional entropy as 
$H(X|Y) = H(XY)-H(Y)$. Clearly  we obtain that $ 0 \leq H(X|Y) \leq H(X)$ and the following two properties hold:
(1) two random variables $X$ and $Y$ are independent  iff $H(X|Y) = H(X)$ and (2) the value of $Y$ implies the value of $X$ iff $H(X|Y)=0$. \\

Let $P$ be a set of $n$ participants where $P = \{p_1, p_2, \cdots, p_n\}$. 
Let $\mathcal{F}$ be a collection of subsets of participants defined as an access structure.
It means that sets in $\mathcal{F}$ are qualified and sets not in $\mathcal{F}$ are unqualified.
Assume that there is the probability distribution on the domain of secrets. We also consider the probability distribution on the vector of share of any subset of $n$ participants. We define the random variable denoting the secret as $S$. Let us define the random variable denoting the share values of any subset $X$ of $n$ participants as $S_X$. 

\begin{definition}[Secret Sharing Scheme] For the given probability distribution on the secrets,  we say that a distribution scheme is a {\it Secret Sharing Scheme}  realizing an access structure  if the following two requirements hold:\\

\noindent {\bf Correcteness.} For every qualified set $B \in \mathcal{F}$,
$$ H(S \ | \ S_B) = 0.$$

\noindent {\bf Privacy.} For every unqualified set $T  \not \in \mathcal{F}$,
$$ H(S \ | \ S_T) = H(S).$$ 

\end{definition}

Th one parameter for measuring the efficiency of a secret sharing scheme is the information rate, which is defined as 
the ratio between the length of secret and the maximum length of the shares given to the participants. Since the length of any share is greater than or equal to the length in a secret sharing scheme, the information rate can not be greater than one. Secret sharing schemes with an information rate equal to one are called {\it ideal secret sharing schemes}. The following Shamir's threshold secret sharing scheme is ideal.

\subsection{Shamir's Threshold Secret Sharing Scheme\\  }

In 1979, Shamir~\cite{AS} introduced a $(t,n)$-{\it threshold secret sharing scheme} as the first works about the secret sharing, in which the qualified subsets  are formed by all the subsets with at least $t$ participants in a set of $n$ participants. 
Let $P$ be a set of $n$ participants where $P = \{p_1, p_2, \cdots, p_n\}$. 
Let $\mathcal{F}_t = \{ A \subseteq \{p_1, p_2, \cdots, p_n \} \ | \ |A| \geq t\}$, where $1 \leq t \leq n $ is an integer,
be a collection of subsets of participants defined as an access structure.
It means that sets in $\mathcal{F}_t$ are qualified and sets not in $\mathcal{F}_t$ are unqualified. The domain of secrets and shares is defined as the elements of a finite field $\mathbf{F}_q$ for some prime power $q > n$.\\

\noindent Let $\alpha_1, \alpha_2, \cdots, \alpha_n$ be $n$ elements of a finite field $\mathbf{F}_q$ corresponding to each participant $p_i$, where 
$ 1 \leq i \leq n$. To distribute shares of a secret value $k$ to the set of $n$ participants, a dealer chooses $t-1$ random elements $a_1, a_2, \cdots a_{t-1}$ uniformly and independently from the finite field $\mathbf{F}_q$ to define the following polynomial of degree at most $t-1$.

$$ P(x)= k + \sum_{i=1}^{t-1} a_i x^i$$

\noindent where $k$ is a secret which is in the field $\mathbf{F}_q$.\\

\noindent  We define the value of $s_j = P(\alpha_j)$ as the share of each participant $p_j$, where $1\leq j \leq t$.
Note that the size of each share is  same as the size of the secret.  We claim that every set $B= \{ p_{i_1}, p_{i_2}, \cdots, p_{i_t} \}$ of size $t$ among $n$ participants
can reconstruct the secret value $k$. Let us consider the following polynomial of degree at most $t-1$

$$ Q(x) = \sum_{l=1}^t s_{i_l} \prod_{1\leq j \leq t, \\ j \neq l } \frac {\alpha_{i_j} - x}{\alpha_{i_j}-\alpha_{i_l}}.$$ 

\noindent Note that $P(\alpha_{i_l}) = s_{i_l} = Q (\alpha_{i_l})$ for all $1 \leq l \leq t$. By using the Lagrange's interpolation theorem, the polynomial  $Q$ are equivalent to the polynomial  $P$ and $Q(0) = P(0) =k$. It means that every set $B$ can reconstruct the secret value $k$ by computing

$$ k = Q(0) = \sum_{l=1}^t s_{i_l} \prod_{1\leq j \leq t, \\ j \neq l } \frac {\alpha_{i_j}}{\alpha_{i_j}-\alpha_{i_l}}$$ \\

\noindent which is  a linear combination of the shares and  $\prod_{1\leq j \leq t, \\ j \neq l } \frac {\alpha_{i_j}}{\alpha_{i_j}-\alpha_{i_l}}$ depends only on the set $B$. Therefore, it satisfies the first requirement.\\

\noindent For any unqualified set $T = \{ p_{j_1}, p_{j_2} , \cdots, p_{j_{t-1}} \}$ with size $t-1$, 
 there exists an unique polynomial $P_a$ of degree at most $t-1$ with $P_a(0)=a$ and $P_a(\alpha_{j_l}) = s_{j_l}$ for every secret $a \in \mathbf{F}_q$, where $1 \leq j \leq t-1$, such that   the probability 
 computing a vector of shares is same as $\frac{1}{q^{t-1}}$
 for every secret $a \in \mathbf{F}_q$. Therefore, it satisfies the second requirement.

\subsection{The Monotone Span Programs    \\ }

  In 1993, Karchmer and Wigderson~\cite{KW} introduced the monotone span programs from which the linear secret sharing schemes can be constructed.  In the linear secret sharing schemes, the secret is an element of the field and each share is a vector over the field whose each coordinate is expressed as a linear  combination of the secret and the coordinates of the random strings which are taken from  some finite field.
Let $P$ be a set of $n$ participants where $P = \{p_1, p_2, \cdots, p_n\}$. 
Let $\mathcal{F}$ be a collection of subsets of participants defined as an access structure.

\begin{definition}[Monotone Span Program] Let $\mathbf{F}$ be a finite field. Let $M$ be an $\alpha \times \beta$ matrix over the field $\mathbf{F}$ where $\rho : \{1,2, \cdots,  \alpha \} \rightarrow \{ p_1, p_2, \cdots, p_n\} $ labels each row of the Matrix $M$ by one participant among $n$ participants. For any subset $X \subseteq \{p_1, p_2, \cdots, p_n \}$, we denote $M_X$ be the sub matrix obtained from the matrix $M$ by restricting all $\alpha $ rows to the rows labeled by participants in $X$.
A {\it Monotone Span Program} $ \mathcal{M}$ over the finite field $\mathbf{F}$ consists of above triple $(\mathbf{F}, M, \rho)$.  We say that the Monotone Span Program $\mathcal{M}$ accepts the set $B \subseteq \{p_1, p_2, \cdots, p_n \}$  if the rows of the submatrix $M_B$  span 
a vector ${\bf{e}}_1 =(1,0,0, \cdots, 0)$ or a target vector $v$.
\end{definition}

 In the following Lemma,  Karchmer and Wigderson~\cite{KW}  proved that every monotone span program over finite fields implies a linear secret sharing scheme for an access structure consisting of all sets which are accepted by the monotone span program $\mathcal{M}$.

\begin{lemma}[Karchmer and Wigderson~\cite{KW}]

Let $\mathcal{M}$ be a monotone span program accepting all sets in an access structure $\mathcal{F}$ consisting of a finite field $\mathbf{F}$, an $\alpha \times \beta$ matrix $M$, and a function $\rho$ labeling $j$-th rows of $M$  by a participant $p_j$.
Then there exists  a linear secret sharing scheme for an access structure $\mathcal{F}$ such that 
 the share of a participant $p_j$ is a vector in $\mathbf{F}^{\alpha_j}$.
\end{lemma}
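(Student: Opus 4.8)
The plan is to convert the monotone span program $\mathcal{M}=(\mathbf{F},M,\rho)$ directly into a distribution scheme and then verify the Correctness and Privacy requirements of the definition of a secret sharing scheme. First I would describe the sharing procedure. To share a secret $s\in\mathbf{F}$, the dealer picks $\beta-1$ elements $r_2,\dots,r_\beta$ independently and uniformly at random from $\mathbf{F}$ and forms the column vector $\mathbf{z}=(s,r_2,\dots,r_\beta)^{T}\in\mathbf{F}^\beta$, so that $\mathbf{e}_1\mathbf{z}=s$. The dealer computes $M\mathbf{z}\in\mathbf{F}^\alpha$ and hands to each participant $p_j$ the entries of $M\mathbf{z}$ lying in the rows labeled by $p_j$ under $\rho$. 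Since exactly $\alpha_j$ rows carry the label $p_j$ (and $\sum_j\alpha_j=\alpha$), the share of $p_j$ is a vector in $\mathbf{F}^{\alpha_j}$, as claimed; equivalently, the share vector held by a set $X$ is $M_X\mathbf{z}$.

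For Correctness I would use the accepting condition. Suppose $B\in\mathcal{F}$, so that $\mathcal{M}$ accepts $B$; by definition the rows of $M_B$ span $\mathbf{e}_1$, i.e.\ there is a vector $\mathbf{w}\in\mathbf{F}^\alpha$ supported only on the rows labeled by participants of $B$ with $\mathbf{w}^{T}M=\mathbf{e}_1$. Multiplying on the right by $\mathbf{z}$ gives $\mathbf{w}^{T}(M\mathbf{z})=\mathbf{e}_1\mathbf{z}=s$. Because $\mathbf{w}$ is supported only on rows of $M_B$, the left-hand side is a fixed linear combination (depending only on $B$) of the coordinates of $M_B\mathbf{z}$, that is, of the shares held by $B$. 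Hence $B$ recovers $s$ by a linear map, and $H(S\mid S_B)=0$.

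For Privacy, suppose $T\notin\mathcal{F}$, so $\mathcal{M}$ rejects $T$ and $\mathbf{e}_1$ is not in the row span of $M_T$. The crucial step is the linear-algebra duality: $\mathbf{e}_1\notin\operatorname{rowspan}(M_T)$ if and only if there is a column vector $\mathbf{k}\in\mathbf{F}^\beta$ with $M_T\mathbf{k}=\mathbf{0}$ and $\mathbf{e}_1\mathbf{k}=1$. One direction is immediate; the other follows since $\operatorname{rowspan}(M_T)=(\ker M_T)^{\perp}$ for the standard nondegenerate bilinear form, so $\mathbf{e}_1$ lies in the row span exactly when it is orthogonal to every kernel vector, and the failure of this yields, after rescaling, the desired $\mathbf{k}$. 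Fix such a $\mathbf{k}$. Given any randomness, write $\mathbf{z}$ for the resulting vector, $s=\mathbf{e}_1\mathbf{z}$ for its secret, and $M_T\mathbf{z}$ for the shares seen by $T$; for any target secret $a\in\mathbf{F}$ the shifted vector $\mathbf{z}'=\mathbf{z}+(a-s)\mathbf{k}$ satisfies $M_T\mathbf{z}'=M_T\mathbf{z}$ and $\mathbf{e}_1\mathbf{z}'=s+(a-s)=a$. Thus, for every observed share pattern $\sigma$, the shift $\mathbf{z}\mapsto\mathbf{z}'$ is a bijection between the random choices yielding secret $s$ with shares $\sigma$ and those yielding secret $a$ with the same shares $\sigma$. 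Since $r_2,\dots,r_\beta$ are uniform, $\Pr[S_T=\sigma\mid S=s]$ is independent of $s$, so $S$ and $S_T$ are independent and $H(S\mid S_T)=H(S)$.

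The construction and the Correctness computation are routine. The main obstacle is the Privacy argument, and within it the duality characterization of rejection together with the verification that the shift $\mathbf{z}\mapsto\mathbf{z}+(a-s)\mathbf{k}$ acts as a measure-preserving bijection on the randomness, which is precisely what forces the conditional distribution of the secret given $S_T$ to be uniform.
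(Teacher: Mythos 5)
Your proposal is correct and follows essentially the same route as the paper: share via $M\mathbf{z}$ with $\mathbf{z}=(s,r_2,\dots,r_\beta)^T$, recover the secret for qualified sets from a vector $\mathbf{w}$ with $\mathbf{w}^T M_B=\mathbf{e}_1$, and prove privacy by producing $\mathbf{k}$ with $M_T\mathbf{k}=\mathbf{0}$, $\mathbf{e}_1\mathbf{k}=1$ and shifting the randomness. Your write-up is in fact slightly more careful than the paper's on the privacy step, since you make the measure-preserving bijection and the resulting independence of $S$ and $S_T$ explicit.
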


\begin{proof} For the given monotone span program $\mathcal{M}$, we define a linear secret sharing scheme as follows. 
\noindent Let $\alpha_1, \alpha_2, \cdots, \alpha_n$ be $n$ elements of a finite field $\mathbf{F}$ corresponding to each participant $p_i$, where 
$ 1 \leq i \leq n$. To distribute shares of a secret value $k$ to the set of $n$ participants, a dealer chooses $b-1$ random elements $r_2, r_3, \cdots, r_{b}$ uniformly and independently from the finite field $\mathbf{F}$ to define $r = (k, r_2, r_3, \cdots, r_b)$.
We define the value of $s_j$ satisfying the following equation as the share of each participant $p_j$, where $1\leq j \leq t$.
$$ (s_1, s_2, \cdots, s_{\alpha}) = Mr.$$

  We claim that every set $B$ in an access structure $\mathcal{F}$
can reconstruct the secret value $k$. Let us consider  the submatrix $M_B$ obtained by restricting all $\alpha $ rows  of the matrix $M$ to the rows labeled by participants in $B$. Since there exists some vector $v$ such that $e_1 = vM_B$, the rows of the submatrix $M_B$  span 
the vector ${\bf{e}}_1 =(1,0,0, \cdots, 0)$. Therefore, 
we say that the Monotone Span Program $\mathcal{M}$ accepts every set $B$ in an access structure $\mathcal{F}$. The shares $s_i$ of every participant $p_i$ in the set $B$ satisfies 
$ (s_1, s_2, \cdots, s_{|B|}) = M_Br.$ Then we conclude that 

$$ v(M_Br) = (v M_B)r= e_1\cdot r=k.$$

\noindent It means that every set $B$ in an access structure $\mathcal{F}$
can reconstruct the secret value $k$ by computing $v(M_Br)$. \\ 

Now we claim that for every set $T \not \in \mathcal{F}$ the rows of $M_T$ do not span the vector $e_1$. We denote  the matrix containing all rows of $M_T$ and additional row $e_1$ by $\left( \begin{array}{c} M_T \\ e_1 \end{array} \right)$. From the property $ {\text{rank}} \left(M_T\right) < {\text{rank}} \left( \begin{array}{c} M_T \\ e_1 \end{array} \right) $,  we say that there exists some vector $w \in \mathbf{F}^b$ such that
$ (M_T)w = 0 $ and $e_1\cdot w=1$. \\

To distribute shares of a secret value $k\in \mathbf{F}$ to every participant, we define $r = (k, r_2, r_3, \cdots, r_b)$ with $b-1$ random elements which are chosen by a dealer. Since the shares of every participant $p_i$ in the set $T$ are $(s_1, s_2, \cdots, s_{|T|}) = M_T r$, we conclude that

$$  (M_T)r' = M_T (r - kw) = (M_T)r + k(M_T)w = (M_T)r = \left(s_1, s_2, \cdots, s_{|T|} \right)$$

\noindent for $r' = r - kw $.\\

\noindent It means that the probability that the shares are generated is the same for every secret $k \in \mathbf{F}$. 
\end{proof}

\section{Access Structures}

The qualified subsets in the secret sharing scheme are the subsets of participants who can
reconstruct the secret value from their shares. A collection of qualified subsets of participants
 called the access structure of the secret sharing scheme. In other words, the unqualified subsets in the secret sharing scheme are the collection of participants who cannot
 have any information about the secret value  from their shares. In any secret sharing scheme,
 an access structure is considered to be monotone, in which the superset of the qualified subsets is
 a qualified subset and determined by the family of minimal qualified subsets of participants. A
 collection of minimal qualified subsets of participants called the basis of the access structure. In a secret sharing scheme, every participant must be in at least one minimal qualified subset amongst them.\\

\subsection{Graph Access Structures} A graph access structure is represented by a graph $G=(V, E)$  and has the following property:
a pair of  vertices (participants) connected by an edge can reconstruct the secret  value from their shares in the secret sharing scheme and  independent vertices (participants) in the graph $G$ does not get any information on the secret value.  A trivial secret sharing scheme realizing a graph access structure shares the secret value independently for each edge in which the total share size  is $O(n^2)$. An improved  linear secret sharing scheme realizing every graph access structure was given by Erdos and Pyber~\cite{EP} in which the total share size is $O(n^2/\log n)$. Secret sharing schemes realizing graph access structures have been studied by many authors~\cite{BFM,BSSV,BD,BS,CSGV,LC2,LC3,LC4,CL,CT,MD,FKMP}. \\

The motivation for studying  secret sharing schemes for graph access structures is that they are simpler than secret sharing schemes for general access structures and  later generalized to general access structures. Recently, in 2020, Peter~\cite{NP} proved that for every constant $ 0 < c < \frac{1}{2}$ a secret sharing scheme  for graph access structure with the share size $O(n^c)$  implies a secret sharing scheme for any access structure with the share size $2^{O(0.5+c)n}$. It means that improved secret sharing schemes in the share size for all graph access structures will result in improved secret sharing schemes for all access structures.  For the dense graph access structures  having at least ${{n} \choose {2}}-n^{1+\beta}$ edges, where $0 \leq \beta < 1$, Beimel, Farras, and Mintz~\cite{BFM} constructed a linear secret sharing scheme  with the total share size $\tilde{O}(n^{5/4+3\beta/4})$.

\subsection{Forbidden Graph Access Structures}

In 1997, Sun and Shieh~\cite{SS} introduced secret sharing schemes for forbidden graph access structures in which the participants correspond to the vertices of the graph $G$ and  a pair of  vertices can reconstruct the secret value from their shares in the secret sharing scheme if they are connected by an edge or their size is at least $3$. Secret sharing schemes for graph access structures  and  forbidden graph access structures are very similar. However, graph access structures are harder to realize than forbidden graph access structures. From the given secret sharing scheme realizing graph access structures, we can construct secret sharing schemes realizing forbidden graph access structures by giving a share of the graph secret sharing schemes and a share of $(3,n)$-threshold secret sharing schemes. However, the total share size of the new scheme is slightly greater than the former scheme. It means that  bounds  on the share size of secret sharing schemes for graph access structures imply the bounds on the share size of secret sharing schemes for forbidden graph access structures. \\

 In 2014, Beimel, Ishai, Kumaresan, and Kushilevitz~\cite{BIKK} constructed a secret sharing scheme realizing all forbidden graph access structures with the total share size $O(n^{3/2})$. Later, in
 2015, a linear secret sharing scheme 
for all forbidden graph access structures was given by
Gay, Kerenidis, and Wee~\cite{GKW}  in which  
 the total share size is $O(n^{3/2})$.  Recently, in 2017, Liu, Vaikuntanathan, and Wee~\cite{LVW} proved that every forbidden graph access structure could be realized by a non-linear secret sharing scheme with the total share size $n^{1+o(1)}$. For the forbidden dense graph access structures  having at least ${{n} \choose {2}}-n^{1+\beta}$ edges, where $0 \leq \beta < {\frac{1}{2}}$, Beimel, Farras, and Peter~\cite{BFP} constructed a linear secret sharing scheme  with the total share size $O(n^{7/6+2\beta/3})$. Later, in 2020, Beimel, Farras, Mintz, and Peter~\cite{BFMP} provided efficient constructions on the share size of linear secret sharing schemes for forbidden sparse and dense  graph access structures based on the monotone span programs.

\subsection{$k$-homogeneous access structures}
A hypergraph is a generalization of a graph in which hyperedges may connect  more than two vertices. A  $k$-uniform hypergraph (or $k$-hypergraph) is a hypergraph in which each hyperedge has exactly $k$ vertices (or $k$-hyperedge). In particular, the complete $k$-uniform hypergraph on  $n$ vertices has all $k$-subsets of $\{1,2, \cdots, n \}$ as $k$-hyperedges.
A $k$-hypergraph access structure is represented by a $k$-uniform hypergraph $\mathcal{H}$  in which the participants correspond to the vertices of the hypergraph $\mathcal{H}$, and  a set of  vertices can reconstruct the secret value from their shares if they are connected by a $k$-hyperedge, and the set of non-adjacent vertices does not get any information on the secret. A $k$-hypergraph access structure is also called a {\it $k$-homogeneous access structure}. \\

A $k$-homogeneous access structure is determined by the family of minimal qualified subsets with exactly $k$ different participants or $k$-uniform hypergraphs $\mathcal{H}$. Recall that the qualified subsets in the $(k,n)$-threshold secret sharing scheme are formed by all the subsets with at least $k$ participants in the set of $n$ participants. For example, 
if we  consider the access structure on a set of five participants $P=\{p_1,p_2,p_3,p_4,p_5\}$ having minimal qualified subsets $A_1 =\{p_1,p_2,p_3\}, A_2 =\{p_2, p_3,p_4\}, A_3 =\{p_3,p_4,p_5\}$, we check that it is not $(3,n)$-threshold but $3$-homogeneous. Note that there is a one-to-one correspondence between $k$-uniform hypergraphs and $k$-homogeneous access structures. Also, there is a one-to-one correspondence between complete $k$-uniform hypergraphs and $(k,n)$-threshold access structures.\\

The secret sharing schemes for $k$-homogeneous access structures have been constructed by many authors based on various techniques.  In 1990, Benaloh and Leichter~\cite{BL} constructed a secret sharing scheme for the $k$-homogeneous access structure with total share size $O(n^k / \log n)$. 
 For the dense $k$-homogeneous access structure,  a much more efficient  secret sharing scheme was constructed by 
Beimel, Farras, and Mintz~\cite{BFM}  in 2012, in which the total share size is  $\tilde{O}(2^kk^kn^{2+\beta})$.
 Recently, in 2020, Beimel and Farras~\cite{BF1} proved that for almost all $k$-homogeneous access structures there exists a secret sharing scheme with maximum share size $2^{\tilde{O}(\sqrt{k\log n})}$, a linear secret  sharing scheme with normalized maximum share size $\tilde{O}(n^{(k-1)/2})$, and a multi-linear secret sharing scheme with normalized  maximum share size $\tilde{O}(\log^{k-1}n)$ for exponentially long secrets using Conditional Disclosure of Secrets (CDS) protocol.

\subsection{$k$-uniform access structures}

An access structure is defined as {\it forbidden $k$-homogeneous  (or forbidden $k$-hypergraph)} determined by a $k$-uniform hypergraph $\mathcal{H}$ in which  the set of vertices can reconstruct the secret value from their shares in the secret sharing scheme if they are connected by a $k$-hyperedge or their size is at least $k+1$. We study the complexity of realizing a forbidden $k$-homogeneous access structure by linear secret sharing schemes. A forbidden $k$-homogeneous access structure has been studied by many authors~\cite{AA, ABFNP, BKN,SS} under the terminology of $k$-uniform access structures. 
Secret sharing schemes for $k$-homogeneous structures  and  $k$-uniform structures are very similar. However, $k$-homogeneous access structures are harder to realize than $k$-uniform access structures. From the given secret sharing scheme realizing $k$-homogeneous access structures, we can construct secret sharing schemes realizing $k$-uniform access structures by giving a share of the  secret sharing schemes realizing $k$-homogeneous access structures and a share of  $(k+1,n)$-threshold secret sharing schemes. However, the total share size of the new scheme is slightly greater than the former scheme.\\

Recently, in 2018, Applebaum and Arkis~\cite{AA} constructed an efficient secret sharing scheme for  $k$-uniform access structures  using  multiparty Conditional Disclosure of Secrets (CDS) protocol. Later, in 2018, Beimel and Peter~\cite{BP} obtained that every $k$-uniform access structure with a binary secret could be realized by a secret sharing scheme in which the share size is $ \min  \{ (O(n/k))^{(k-1)/2}, O(n\cdot 2^{n/2})\} $. By improving their result, in 2019, Applebaum, Beimel, Farras, Nir, and Peter~\cite{ABFNP} obtained that every $k$-uniform access structure with a binary secret could be realized by a secret sharing scheme with share size $ 2^{\tilde{O}(\sqrt{k\log n})}$ by combining  CDS protocols and transformations. In 2020, Beimel, Farras, Mintz, and Peter~\cite{BFMP} obtained the lower bound on the max share size for sharing an one-bit secret in every linear secret sharing scheme realizing $k$-uniform access structures using CDS protocol.
In this paper, we provide efficient constructions 
 on the share size of linear secret sharing schemes for  sparse  $k$-uniform access structure  having at  most $n^{1+\beta}$ and dense 
   $k$-uniform access structure having at least $ {{n}\choose{k}}- n^{1+\beta}$, where $0 \leq \beta <1$,
  for a constant $k$ based on the hypergraph decomposition technique and the monotone span programs.

\section{Hypergraph Decomposition}
In this section, we describe the technique of hypergraph decomposition which plays  an important role for proving the main theorem. A hypergraph decompostion technique is a generalization of graph decomposition studied in~\cite{AA, BSSV, CG,DS} and first introduced in~\cite{BSSV}.
A hypergraph is defined as a pair $(V,E)$ where $V$ is a non-empty set of vertices and $E$ is a set of non-empty subsets of $V$ called hyperedges.
A  $k$-uniform hypergraph  is a hypergraph in which each hyperedge has exactly $k$ vertices.  In the hypergraph decomposition technique,  we first represent $k$-homogeneous access structure (or $k$-uniform access structure) as a $k$-uniform hypergraph $\mathcal{H}$. Then we decompose the $k$-uniform hypergraph in smaller sub-hypergraphs $H_1, H_2, \cdots, H_m$ for which we construct  efficient secret sharing schemes and such that all the hyperedges in $\mathcal{H}$ belong to at least one of  $H_i$, where $1 \leq i \leq m$. The secret sharing schemes for $k$-uniform hypergraphs are obtained as an union of the secret sharing schemes for all  sub-hypergraphs $H_1, H_2, \cdots, H_m$. The following  is the definition of the hypergraph decomposition in the graph theory.

\begin{definition}[Hypergraph Decomposition]
Let $\mathcal{H} =(V,E)$ be a hypergraph and let $\mathcal{H}_i (V_i, E_i)$ be a sub-hypergraph of a hypergraph $\mathcal{H}$ such that $V_i \subset V$ and $E_i \subset E$, where $1 \leq i \leq m$. The set  $\mathcal{F} = \{ \mathcal{H}_1, \mathcal{H}_2, \cdots, \mathcal{H}_m\}$ is said to be a decomposition of $\mathcal{H}$ if and only if each hyperedge in the hypergraph $\mathcal{H}$ belongs to at least one $\mathcal{H}_i$, where $1 \leq i \leq m$.
\end{definition}

In this paper, we consider sub-hypergraphs as the class of $k$-partite $k$-uniform hypergraphs for $k \geq 2$. 
 A $k$-uniform hypergraph is said to be $k$-partite if its vertex set can be partitioned into $k$ nonempty sets $V_1, V_2, \cdots, V_k$ in such a way that every hyperedge intersects every set $V_i$ of the partition in exactly one vertex, where $1\leq i \leq k$. In this paper, we utilize the following result that every $k$-uniform hypergraph can be decomposed into the set of sub-hypergraphs consisting of $O(\log n)$ $k$-partite $k$-uniform hypergraphs. It means that 
$k$-uniform hypergraph is covered by
  $k$-partite $k$-uniform hypergraphs of size $O(\log n)$.

\begin{lemma}\label{lemma:hyper}
Let $\mathcal{H} =(V,E)$ be a $k$-uniform hypergraph and let $\mathcal{H}_i (V_i, E_i)$ be a $k$-partite $k$-uniform sub-hypergraph of a hypergraph $\mathcal{H}$ such that $V_i \subset V$ and $E_i \subset E$, where $1 \leq i \leq m$. Then the $k$-uniform hypergraph $\mathcal{H}$ is covered by the set  $\mathcal{F} = \{ \mathcal{H}_1, \mathcal{H}_2, \cdots, \mathcal{H}_m\}$ consisting of $k$-partite $k$-uniform hypergraphs, where $m =O(\log n)$.
\end{lemma}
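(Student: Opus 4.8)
The plan is to recast the covering problem as a coloring (perfect hashing) problem and then settle it with a first-moment / union-bound argument. First I would observe that a $k$-partite $k$-uniform sub-hypergraph is exactly what one reads off from a coloring $\chi : V \to \{1,2,\ldots,k\}$: setting $V_i = \chi^{-1}(i)$ partitions $V$ into $k$ colour classes, and a hyperedge $e \in E$ can be placed in the associated sub-hypergraph precisely when $\chi$ assigns $k$ distinct colours to the $k$ vertices of $e$ (equivalently, $\chi$ restricted to $e$ is a bijection onto $\{1,\ldots,k\}$); call such an $e$ \emph{rainbow} for $\chi$. Consequently a family $\chi_1,\ldots,\chi_m$ of colorings induces sub-hypergraphs $\mathcal{H}_1,\ldots,\mathcal{H}_m$ that cover $\mathcal{H}$ if and only if every hyperedge of $E$ is rainbow for at least one $\chi_j$. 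This reduces the lemma to exhibiting $m = O(\log n)$ colorings under which every $k$-set of $E$ becomes rainbow at least once.

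Next I would produce such a family probabilistically. Choose $\chi_1,\ldots,\chi_m$ independently, each colour of each vertex drawn uniformly and independently from $\{1,\ldots,k\}$. For a fixed hyperedge $e$, a single $\chi_j$ is rainbow on $e$ with probability $k!/k^k$, so none of the $m$ colorings is rainbow on $e$ with probability $(1 - k!/k^k)^m$. Since $|E| \le \binom{n}{k} \le n^k$, a union bound bounds the probability that some hyperedge is left uncovered by $n^k\,(1 - k!/k^k)^m$. I would then pick $m$ making this strictly less than $1$: using $-\ln(1-p) \ge p$ with $p = k!/k^k$, it suffices to take $m \ge k\ln n \cdot (k^k/k!)$, and by Stirling $k^k/k! = O(e^k/\sqrt{k})$, giving $m = O(e^k\sqrt{k}\,\log n)$, which is $O(\log n)$ for constant $k$.

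Finally, because the failure probability is below $1$, some choice of $\chi_1,\ldots,\chi_m$ covers every hyperedge, and the induced $\mathcal{H}_1,\ldots,\mathcal{H}_m$ form the desired decomposition $\mathcal{F}$. I would discard any $\mathcal{H}_j$ with no hyperedge, since it contributes nothing to the cover; each surviving $\mathcal{H}_j$ then automatically has all $k$ parts nonempty, because any rainbow hyperedge in it meets every colour class, which reconciles the construction with the nonemptiness requirement in the definition of a $k$-partite hypergraph. The main obstacle I anticipate is not the argument but the bookkeeping around the $e^k$ factor hidden in $m$: the bound is $O(\log n)$ only once $k$ is treated as a constant, so I would make the dependence on $k$ explicit before invoking constancy. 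This is the $k$-uniform analogue of the classical fact that a graph on $n$ vertices is covered by $\lceil \log_2 n \rceil$ bipartite graphs via binary vertex labels; here the binary labels are replaced by a perfect hash family.
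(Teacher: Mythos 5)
Your proposal is correct and follows essentially the same route as the paper: independent uniformly random $k$-colorings of the vertices, the observation that a fixed hyperedge is rainbow under one coloring with probability $k!/k^k$, and a union bound over the at most $n^k$ hyperedges to show $m = O(\log n)$ colorings suffice for constant $k$. The only difference is that you bound the per-edge failure probability directly by $(1 - k!/k^k)^m$ using independence, whereas the paper applies a Chernoff bound to the count of colorings covering each edge; your computation is the simpler and sharper of the two, and your explicit tracking of the $k^k/k!$ dependence is a welcome clarification the paper leaves implicit.
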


 \begin{proof}
 For every $i \in [O(\log n)]$,  take  random mapping $\phi_i : V \rightarrow \{1,2, \cdots, k\}$.  For  $i \in [O(\log n)]$,  let $X_i$ be the indicator random variable for the event the given hyperedge $e =(v_1,v_2, \cdots, v_k)$ in the hypergraph $\mathcal{H}$ appears as a hyperedge in the sub-hypergraph $\mathcal{H}_i$.
 For the random variable $X$ satisfying $X = \sum_{i=1}^{O(\log n)} X_i$, we have

 \begin{align*}
 \mathbb{E}(X)= \sum^{O(\log n)}_{i=1}Pr ( \{ \phi_i(v_1), \phi_i(v_2), \cdots, \phi_i(v_k) \} {\text \ covers \ } [k]) = \sum^{O(\log n)}_{i=1} \frac{k !}{k^k} = O(\log n)\cdot \frac{k !}{k^k} 
 \end{align*}
 
 Using the second variant of Chernoff bound, we obtain

 \begin{align*}
 &Pr\left(X \leq \left(1-(1-\frac{k^k}{k^{k}+1})\right) \cdot O(\log n) \cdot \frac{k !}{k^k} \right) = Pr\left(X \leq  O(\log n) \cdot \frac{k !}{k^k+1} \right)\\
 & \leq e^{\frac{-\left(1-\frac{k^k}{k^{k}+1}\right)^2 \cdot O(\log n) \cdot \frac{k !}{k^k}}{2}} \leq e^{-1.5 \ln (n^k)} = n^{-1.5k}. 
 \end{align*}
 
 Therefore, we derive that
 
 \begin{align*}
 n^k \cdot Pr\left(X \leq  O(\log n) \cdot \frac{k !}{k^k+1} \right) \leq n^k \cdot n^{-1.5k} = n^{-\frac{k}{2}} < 1
 \end{align*}
 
 We conclude that each hyperedge in the $k$-uniform hypergraph $\mathcal{H}$ belongs to at least one $k$-partite $k$-uniform sub-hypergraph  $\mathcal{H}_i$, where $1 \leq i \leq O(\log n)$. It means that the $k$-uniform hypergraph $\mathcal{H}$ is covered by the set  $\mathcal{F} = \{ \mathcal{H}_1, \mathcal{H}_2, \cdots, \mathcal{H}_m\}$ consisting of $k$-partite $k$-uniform hypergraphs, where $m =O(\log n)$.
 \end{proof}

 \section{Constructions for $k$-partite $k$-uniform hypergraphs}

 To study the complexity of realizing a  $k$-uniform access structure by linear secret sharing schemes, we utilize 
  the technique of hypergraph decomposition in which secret sharing schemes for $k$-uniform hypergraphs are obtained as a union of the secret sharing schemes for all  $k$-partite $k$-uniform sub-hypergraphs. 
  In 1993, Karchmer and Wigderson proved that if an access structure can be described by a  monotone span program, then it has an efficient linear secret sharing scheme.  In this section, we first construct the linear secret sharing schemes for $k$-partite $k$-uniform hypergraphs using monotone span programs.
  Using these constructions, we give more efficient linear secret sharing schemes for sparse and dense $k$-uniform hypergraphs in Section $6$.
In order to do so, we first need the following lemma about the construction of linear spaces corresponding to vertices.

\newpage

\begin{lemma}\label{r:lem1}Let $\mathcal{H}(V,E)$ be a $k$-partite $k$-uniform hypergraph, where $V$ is a set of vertices and $E$ is a set of hyperedges satisfying the following condition.
 Suppose that $V$ is partitioned into  $ A_1 \cup A_2 \cdots, \cup A_k$ with $|A_i|=m_i$.
Let $E$ be the family of subsets with exactly one vertex in common with each $A_i$ as follows.
 $$ E = \{ (a_{1,i_1}, \cdots, a_{k, i_k}) \ | \  a_{1, i_1} \in A_1,\  \cdots, \  a_{k,i_k}\in A_k \}.$$

 Suppose that every vertex in $A_k$ is contained in at most $d$ members in $E$ for some $ d\leq n$.
 Let $\mathbf{F}$ be a finite field with $| \mathbf{F} | \geq m_1+\cdots +m_{k-1}$. Let us denote $A_j = \{a_{j,1},  \cdots, a_{j,m_j} \}$, where $1\leq j \leq k$. For every $1 \leq j \leq k-1$, there exist a linear space $ V_{j,t} \subseteq \mathbf{F}^{(d+1)^{k-1}}$ corresponding to a vertex $  a_{j,t}$ in the set $A_j$, where $ 1 \leq t \leq m_j$. Also, there exists a vector $z_{k,i_k} \in  \mathbf{F}^{(d+1)^{k-1}} $ corresponding to each vertex $a_{k,i_k}$ in the set $A_k$
  such that
$$ z_{k,i_k} \in V_{1,i_1}, \cdots,  z_{k,i_k} \in V_{k-1,i_{k-1}} \  \Longleftrightarrow  \  \left(a_{1, i_1},  \cdots,  a_{k-1,i_{k-1}},  a_{k,i_k} \right) \in Q. $$

\noindent where $ 1 \leq i_1 \leq m_1,  \cdots$,  $1 \leq i_k \leq m_k$.
\end{lemma}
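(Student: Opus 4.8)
The plan is to realize the incidence structure of $Q$ by a Vandermonde/polynomial encoding, tensored $k-1$ times so that the ambient space has exactly the required dimension $(d+1)^{k-1}$. First I would use the hypothesis $|\mathbf{F}| \ge m_1+\cdots+m_{k-1}$ to fix $m_1+\cdots+m_{k-1}$ pairwise distinct field elements $\alpha_{j,t}$, one for each vertex $a_{j,t}$ with $1\le j\le k-1$, and attach to it the Vandermonde vector $u_{j,t}=(1,\alpha_{j,t},\alpha_{j,t}^2,\ldots,\alpha_{j,t}^{d})\in\mathbf{F}^{d+1}$. I would then identify $\mathbf{F}^{(d+1)^{k-1}}$ with $\mathbf{F}^{d+1}\otimes\cdots\otimes\mathbf{F}^{d+1}$ ($k-1$ factors), hence with the space of polynomials $P(x_1,\ldots,x_{k-1})$ over $\mathbf{F}$ of degree at most $d$ in each variable, a coefficient tensor $w$ corresponding to $P_w=\sum w_{\ell_1\cdots\ell_{k-1}}\,x_1^{\ell_1}\cdots x_{k-1}^{\ell_{k-1}}$.

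With this dictionary the subspaces are essentially forced. I would define $V_{j,t}$ to be the set of tensors $w$ whose mode-$j$ contraction against $u_{j,t}$ vanishes, i.e.\ $P_w(x_1,\ldots,x_{j-1},\alpha_{j,t},x_{j+1},\ldots,x_{k-1})\equiv 0$, equivalently the polynomials divisible by $(x_j-\alpha_{j,t})$. Each $V_{j,t}$ is a genuine linear subspace of $\mathbf{F}^{(d+1)^{k-1}}$, and the key observation is that membership $w\in\bigcap_{j=1}^{k-1}V_{j,i_j}$ is equivalent to the single divisibility statement $\prod_{j=1}^{k-1}(x_j-\alpha_{j,i_j})\mid P_w$, since the $k-1$ linear factors involve distinct variables and are therefore pairwise coprime.

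For the vector attached to $a_{k,i_k}$ I would examine its link: the at-most-$d$ hyperedges of $Q$ through it, and let $T_j\subseteq[m_j]$ be the set of its $A_j$-endpoints, so $|T_j|\le d$. I would then set $z_{k,i_k}$ to be the coefficient tensor of
\[
P(x_1,\ldots,x_{k-1})=\prod_{j=1}^{k-1}\ \prod_{t\in T_j}\,(x_j-\alpha_{j,t}),
\]
which by $|T_j|\le d$ has degree at most $d$ in each variable, so $z_{k,i_k}\in\mathbf{F}^{(d+1)^{k-1}}$. The forward implication is then immediate: if $(a_{1,i_1},\ldots,a_{k,i_k})\in Q$ then $i_j\in T_j$ for every $j$, each factor $(x_j-\alpha_{j,i_j})$ occurs in $P$, and so $z_{k,i_k}\in\bigcap_j V_{j,i_j}$.

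The hard part will be the converse, and this is where I expect the real difficulty to lie. By the divisibility criterion above, $z_{k,i_k}\in\bigcap_j V_{j,i_j}$ holds precisely when $i_j\in T_j$ for all $j$ simultaneously, that is, exactly for the tuples in the product $T_1\times\cdots\times T_{k-1}$; because membership in each $V_{j,t}$ depends only on the single coordinate $i_j$, the accepted tuples unavoidably form this combinatorial box. To close the biconditional one must therefore rule out ``cross tuples''—combinations of genuine endpoints that are not themselves hyperedges—so that $T_1\times\cdots\times T_{k-1}$ coincides with the true link of $a_{k,i_k}$. This is the crux: I would try to leverage the degree bound $d$ together with the distinctness of the $\alpha_{j,t}$ to exclude such cross tuples, and, if that is not enough in general, I would check whether the $k$-partite pieces produced by the decomposition of Lemma~\ref{lemma:hyper} can be refined so that each vertex of $A_k$ has a product-structured link, which would make $T_1\times\cdots\times T_{k-1}$ equal to the edge set and force the equivalence.
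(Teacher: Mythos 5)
Your construction is, detail for detail, the one the paper uses: the same identification of $\mathbf{F}^{(d+1)^{k-1}}$ with polynomials of degree at most $d$ in each of $X_1,\ldots,X_{k-1}$, the same subspaces $V_{j,t}=\{P : P(\ldots,\alpha_{j,t},\ldots)\equiv 0\}$, and the same vector $z_{k,i_k}$ given by the coefficient tensor of $\prod_{j=1}^{k-1}\prod_{t\in T_j}(X_j-\alpha_{j,t})$, where $T_j$ collects the $A_j$-endpoints of the hyperedges through $a_{k,i_k}$. The forward implication is handled identically in both arguments.

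The ``crux'' you isolate in the converse is a genuine gap, and it is present in the paper's own proof, which passes from ``$\alpha_{j,i_j}\in\{\alpha_{j,i_{j,1}},\ldots,\alpha_{j,i_{j,d'}}\}$ for every $j$'' directly to ``$(a_{1,i_1},\ldots,a_{k,i_k})\in Q$'' with no justification. Your box observation makes the obstruction precise and shows it is structural rather than an artifact of this particular encoding: since membership of a fixed $z$ in $V_{j,i_j}$ depends only on the single index $i_j$, the set of tuples accepted by $z_{k,i_k}$ is necessarily a product $S_1\times\cdots\times S_{k-1}$, so \emph{no} choice of subspaces and vectors of this form can realize a vertex of $A_k$ whose link is not a combinatorial box. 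Concretely, for $k=3$ take $A_1=\{a,b\}$, $A_2=\{c,d\}$, $A_3=\{v\}$ with hyperedges $(a,c,v)$ and $(b,d,v)$: then $T_1=\{a,b\}$, $T_2=\{c,d\}$, and $z_v$ lies in $V_{1,a}\cap V_{2,d}$ although $(a,d,v)\notin E$, so the stated biconditional fails. For $k=2$ the box is one-dimensional and the issue disappears, which is why the graph-case antecedents of this lemma are sound. Your proposed repair --- refining the decomposition so that each $A_k$-vertex has a product-structured link --- is the natural one, but it is not carried out in the paper, and one would need to check that it does not inflate the share-size accounting in Lemmas~\ref{r:lem2}, \ref{r:lem3}, and~\ref{main:lem1}. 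In short: you have not missed an idea the paper supplies; you have correctly located a step the paper does not actually prove.
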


\begin{proof}
Let $v = (v_{(0,\cdots,0)},  \cdots, v_{(j_1,\cdots, j_{k-1})}, \cdots,  v_{(d,\cdots,d)})$ be a vector in  the finite field $\mathbf{F}^{(d+1)^{k-1}}$, where $ 0 \leq j_1,  \cdots, j_{k-1} \leq d$.
First, we construct the following polynomial of degree $(k-1)d$  whose coefficients correspond  to coordinates of the vector $v$. 
$$v(X_1,  \cdots, X_{k-1}) = \sum_{0\leq j_1,  \cdots j_{k-1} \leq d} \gamma_{j_1\cdots j_{k-1}}{X_1}^{j_1} \cdots {X_{k-1}}^{j_{k-1}}\ \in  \ \mathbf{F} [X_1,  \cdots,X_{k-1}]$$ 
in which the coefficient $\gamma_{j_1\cdots j_{k-1}}$  is same as the coordinate $v_{(j_1,\cdots, j_{k-1})}$ in a vector $v$ in $\mathbf{F}^{(d+1)^{k-1}}$, where $0 \leq j_1,    \cdots, j_{k-1} \leq d$.\\

\noindent  For  $1\leq j \leq k$, let us  consider an element $ \alpha_{j,{i_j}} $ in the  field   $\mathbf{F}$  corresponding to a vertex $a_{j,i_j} $ in  $A_j$, where $ 1 \leq i_j \leq m_j$.
Now we define a linear space $V_{j,i_j} \subseteq \mathbf{F}^{(d+1)^{k-1}}$ corresponding to a vertex $ a_{j,i_j}$ in  $A_j$, where $ 1\leq j \leq k-1$, as follows.
  For  every $1\leq j \leq k-1$, let us define the linear space $V_{j,i_j}$ as 
 the space of polynomials $P(X_1, \cdots, X_{k-1})$  of degree $(k-1)d$ satisfying   $P(X_1,  \cdots, X_{j-1}, \alpha_{j,i_{j}}, X_{j+1}, \cdots,X_{k-1} ) = 0$, where $ 1 \leq i_j \leq m_j$.\\
  
 \noindent Since every vertex in $A_k$ is contained in at most $d$ members in $Q$ for some $ d\leq n$, 
 for  each vertex $a_{k, i_{k}}$ in $A_k$, where $1 \leq i_{k} \leq m_k$, first we define the family of sets  in $Q$ containing  $a_{k, i_{k}}$
 as
 $$ Q_{i_k} = \{ (a_{1,i_{1,t}}, \cdots, a_{k-1,i_{k-1,t}}, a_{k, i_{k}}) \ | | \  a_{1, i_1,t} \in A_1, \cdots, \  a_{k-1,i_{k-1},t}\in A_{k-1} {\text \ and \ } \ 1 \leq t \leq d' \}$$
 
 \noindent  for some $d' \leq d$.\\
 
 \noindent Now let us  define a vector $z_{k,i_k} \in  \mathbf{F}^{(d+1)^{k-1}} $ corresponding to each vertex $a_{k, i_{k}}$ in the set $A_k$, where $1 \leq i_{k} \leq m_k$, whose coordinates   correspond to the coefficients of  the following polynomial of degree $(k-1)d$.
 
\begin{align*}
z_{k,i_k}(X_1,  \cdots, X_{k-1}) & =  (X_1 -\alpha_{1,i_{1,1}}) \cdots (X_1-\alpha_{1,i_{1,d'}}) \\
 & \ \ \  \  \ \ \ \  \  \ \ \ \  \  \ \ \ \  \  \ \vdots \\
 & \cdot (X_{k-1} -\alpha_{k-1,i_{k-1,1}}) \cdots (X_{k-1}-\alpha_{k-1,i_{k-1,d'}}).
\end{align*}

\noindent Then we obtain that  $z_{k,i_k} \in V_{1,i_1}, \cdots,  z_{k,i_k} \in V_{k-1,i_{k-1}}$  implies that 
 $$\alpha_{1,i_1} \in \{\alpha_{1, i_{1,1}}, \cdots,  \alpha_{1, i_{1, d'}} \},  \cdots, \alpha_{k-1,i_{k-1}} \in \{\alpha_{k-1, i_{k-1,1}},  \cdots, \alpha_{k-1, i_{k-1, d'}} \} .$$ 
 
 \noindent Then we conclude that $$\left(a_{1, i_1},  \cdots,  a_{k,i_k} \right) \in Q.$$
 
  \noindent This completes the proof of Lemma~\ref{r:lem1}.
\end{proof}
 
 First, we investigate  a linear secret sharing scheme for  $k$-partite $k$-uniform hypergraphs satisfying the following condition  based on the monotone span programs. We utilize the following lemma for constructing more efficient linear secret sharing schemes for spare $k$-partite $k$-uniform hypergraphs in Section $6$,
when the size of all $k$ parts is the same.

\begin{lemma}\label{r:lem2}Let $\mathcal{H}(V,E)$ be a $k$-partite $k$-uniform hypergraph, where $V$ is a set of vertices and $E$ is a set of hyperedges satisfying the following condition.
 Suppose that $V$ is partitioned into  $ A_1 \cup A_2 \cdots, \cup A_k$ with $|A_i|=m_i$.
Let $E$ be the family of subsets with exactly one vertex in common with each $A_i$ as follows.
 $$ E = \{ (a_{1,i_1}, \cdots, a_{k, i_k}) \ | \  a_{1, i_1} \in A_1,\  \cdots, \  a_{k,i_k}\in A_k \}.$$
 Suppose that every vertex  in $A_k$ is contained in at most $d$ members in $E$ for some $ d\leq n$. Then there exists a linear secret sharing scheme for  a $k$-uniform access structure $\Gamma$ determined by $\mathcal{H}(V,E)$ with total share size $m_k +(d+1)^{k-1}(m_1+\cdots+m_{k-1})$.
\end{lemma}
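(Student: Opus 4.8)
The plan is to exhibit a monotone span program over $\mathbf{F}$ whose accepted sets are exactly those containing a hyperedge of $\mathcal{H}$, and then to invoke the lemma of Karchmer and Wigderson~\cite{KW} to convert it into a linear secret sharing scheme. First I would apply Lemma~\ref{r:lem1} to obtain, for every $1\le j\le k-1$ and every vertex $a_{j,t}\in A_j$, a linear space $V_{j,t}\subseteq\mathbf{F}^{(d+1)^{k-1}}$, and for every vertex $a_{k,i_k}\in A_k$ a vector $z_{k,i_k}\in\mathbf{F}^{(d+1)^{k-1}}$, so that a $k$-tuple $(a_{1,i_1},\dots,a_{k,i_k})$ is a hyperedge if and only if $z_{k,i_k}\in V_{j,i_j}$ holds simultaneously for every $1\le j\le k-1$. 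Thus realizing the access structure amounts to forcing any reconstructing set to certify this conjunction of $k-1$ membership conditions.

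For the scheme I would draw $k-1$ independent uniform random vectors $\rho_1,\dots,\rho_{k-1}\in\mathbf{F}^{(d+1)^{k-1}}$ together with the secret $s$, and take the randomness vector of the span program to be $r=(s,\rho_1,\dots,\rho_{k-1})$. To each vertex $a_{k,i_k}\in A_k$ I assign the single share $s+\sum_{j=1}^{k-1}\langle\rho_j,z_{k,i_k}\rangle$ (one row, carrying entry $1$ on the $s$-coordinate and a copy of $z_{k,i_k}$ in each $\rho_j$-block). To each vertex $a_{j,t}\in A_j$ with $1\le j\le k-1$ I assign the shares $\langle\rho_j,w\rangle$ as $w$ runs over a basis of $V_{j,t}$ (rows carrying those basis vectors in the $j$-th block and zeros elsewhere). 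The use of $k-1$ \emph{independent} blocks $\rho_j$, aggregated into a single mask at the $A_k$-vertex, is the crucial point: a single shared $\rho$ would let a set reconstruct as soon as $z_{k,i_k}\in\sum_j V_{j,i_j}$, a sum-of-subspaces condition that wrongly qualifies some non-hyperedges, whereas the independent blocks convert reconstruction into the required conjunction. Since $\dim V_{j,t}\le (d+1)^{k-1}$, each $A_k$-vertex costs one field element and each $A_j$-vertex costs at most $(d+1)^{k-1}$, so the total share size is bounded by $m_k+(d+1)^{k-1}(m_1+\cdots+m_{k-1})$, as claimed.

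Correctness is the easy direction: for a hyperedge $\{a_{1,i_1},\dots,a_{k,i_k}\}$ Lemma~\ref{r:lem1} gives $z_{k,i_k}\in V_{j,i_j}$ for every $j$, so each $a_{j,i_j}$ can form $\langle\rho_j,z_{k,i_k}\rangle$ from its shares; summing these and subtracting from the share of $a_{k,i_k}$ recovers $s$, i.e.\ the corresponding rows span the target $\mathbf{e}_1$. Monotonicity then handles every superset of a hyperedge. Qualified sets of size at least $k+1$ are not certified by this program, but are made qualified once and for all by superposing a single $(k+1,n)$-Shamir scheme in the global construction, so within this lemma the share size is the displayed quantity.

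The main obstacle is privacy: I must verify that no forbidden set $T$ (one with $|T|\le k$ containing no hyperedge) has its rows spanning $\mathbf{e}_1$. The design pins this on the conjunction of Lemma~\ref{r:lem1}: in any row combination equal to $\mathbf{e}_1$ the coefficients on the $A_k$-rows must sum to $1$, while each of the $k-1$ blocks must cancel, and these cancellations are coupled only through those coefficients because the $\rho_j$ are independent. If $T$ omits some part $A_{j_0}$ with $j_0<k$, or contains no $A_k$-vertex, then the corresponding block (or the secret coordinate) cannot be cancelled; and if $T$ uses exactly one vertex per part but is not a hyperedge, Lemma~\ref{r:lem1} supplies an index $j_0$ with $z_{k,i_k}\notin V_{j_0,i_{j_0}}$, so the $z$-component in block $j_0$ is uncancellable. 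The delicate case, which I expect to demand the most care, is when $T$ places several vertices in one part and/or uses several $A_k$-vertices: here one must argue, using $|T|\le k$ together with the independence of the $\rho_j$ and the cylinder structure of the spaces $V_{j,t}$, that some block still fails to cancel, so $\mathbf{e}_1$ lies outside the row span and the aggregated mask keeps $s$ uniformly distributed.
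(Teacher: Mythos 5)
Your proposal follows the same blueprint as the paper's proof --- invoke Lemma~\ref{r:lem1} to get the spaces $V_{j,t}$ and vectors $z_{k,i_k}$, build a monotone span program with one row per $A_k$-vertex and $\dim V_{j,t}\le (d+1)^{k-1}$ rows per vertex of $A_1,\dots,A_{k-1}$, and apply the Karchmer--Wigderson lemma --- but it differs in one substantive design choice. The paper places all the $V_{j,i_j}$-basis rows and the $z_{k,i_k}$ row into a \emph{single} $(d+1)^{k-1}$-dimensional coordinate block (augmented by $k$ flag coordinates $e_j'=(e_{k-j+1},0,\dots,0)$ and the target $(1,1,\dots,1,\mathbf{0})$), so its acceptance condition for a one-vertex-per-part set is $z_{k,i_k}\in V_{1,i_1}+\cdots+V_{k-1,i_{k-1}}$, a \emph{sum} of subspaces. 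You instead use $k-1$ independent blocks $\rho_1,\dots,\rho_{k-1}$ aggregated at the $A_k$-vertex, which forces each block to cancel separately and turns acceptance into the \emph{conjunction} $z_{k,i_k}\in V_{j,i_j}$ for every $j$ --- which is exactly the condition Lemma~\ref{r:lem1} characterizes. This is not a cosmetic difference: since each $V_{j,i_j}$ is a "vanishing on the hyperplane $X_j=\alpha_{j,i_j}$" space, the sum $V_{1,i_1}+\cdots+V_{k-1,i_{k-1}}$ is strictly larger than the intersection (for $k=3$ it is the space of polynomials vanishing at the single point $(\alpha_{1,i_1},\alpha_{2,i_2})$, an OR of the two factor conditions rather than an AND), so the paper's single-block program accepts one-per-part tuples that are not hyperedges, and its proof --- which only verifies the correctness direction and never checks that forbidden sets fail to span the target --- does not notice this. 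Your two-block-per-variable design buys a construction for which the privacy claim is at least plausible, at no cost in the number of rows, so the stated share bound is preserved.

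The one genuine gap on your side is the one you flag yourself: the privacy argument for forbidden sets containing two or more $A_k$-vertices (which, by the counting $|T|\le k$, necessarily miss some part $A_{j_0}$) reduces to showing that $0$ does not lie in the affine hull of the corresponding vectors $z_{k,i}$, i.e.\ that no combination with $\sum_i c_i=1$ satisfies $\sum_i c_i z_{k,i}=0$. This does not follow formally from Lemma~\ref{r:lem1} and needs an explicit argument (it is immediate when all the polynomials $z_{k,i}$ have the same top monomial with coefficient $1$, e.g.\ after padding every vertex of $A_k$ to exactly $d$ factors per variable, since then the leading coefficient of the combination is $\sum_i c_i=1\neq 0$). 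Until that step is supplied your proof of the lemma is incomplete --- though no more so than the paper's, which omits the privacy verification entirely.
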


\begin{proof}

Let $\Gamma$ be a  $k$-uniform access structure   determined by $\mathcal{H}(V,E)$.  First, we construct a monotone span program accepting this  $k$-uniform access structure $\Gamma$ using $(d+1)^{k-1}$ rows labeled by $a_{j,i_j}$, where $1
\leq j \leq k-1$ and $1 \leq i_j\leq m_j$,  and one row labeled by $a_{k,i_k}$, where $1 \leq i_k\leq m_k$. Using Lemma~\ref{r:lem1}, there exists a linear space $V_{j,i_j} \subseteq \mathbf{F}^{(d+1)^{k-1}}$ corresponding to a vertex $ a_{j,i_j}$ in the set $A_j$, where $ 1 \leq j \leq k-1$.
Let us denote  the basis of the linear space $V_{j,i_j} \subseteq \mathbf{F}^{(d+1)^{k-1}}$ as $\{ v_{j, i_j,1},  \cdots, v_{j, i_j,(d+1)^{k-1}-1}\}$. \\

\noindent To construct $(d+1)^{k-1}$ rows labeled by $a_{j,i_j}$,  we consider the following  vector in  $ \mathbf{F}^{(d+1)^{k-1}+k}$ 
\begin{align}\label{basis1}
\{ v'_{j,i_j,1},  \cdots, v'_{j,i_j,(d+1)^{k-1}-1}, e_j'= (e_{k-j+1}, 0,0,\cdots, 0) \}.
\end{align}
\noindent where $v'_{j, i_j,l} = (0,\cdots,0,v_{j,i_j,l})$ is a vector in   $ \mathbf{F}^{(d+1)^{k-1} + k}$and $e_1,  \cdots, e_k$ are standard basis vectors in $\mathbf{F}^k$.\\

\noindent To construct one row labeled by $a_{k,i_k}$, we consider the following vector  in $\mathbf{F}^{(d+1)^{k-1}+k}$
\begin{align}\label{basis2}
z'_{k,i_k} = ( 1, 0, \cdots, 0, z_{k,i_k})
\end{align}

\noindent where the vector $z_{k,i_k} \in  \mathbf{F}^{(d+1)^{k-1}} $ is obtained  in Lemma~\ref{r:lem1}.\\

\noindent Let us set a target vector  in $\mathbf{F}^{(d+1)^{k-1}+k}$ as $$ (1, 1,\cdots, 1, {\bf{0_{(d+1)^{k-1}}}}) $$

\noindent   where ${\bf{0_{(d+1)^{k-1}}}}$ is a zero vector in $ \mathbf{F}^{(d+1)^{k-1}}$. \\

\noindent Using Lemma~\ref{r:lem1}, we obtain that 
$$ z_{k,i_k} \in V_{1,i_1},  \cdots,  z_{k,i_k} \in V_{k-1,i_{k-1}}.$$

\noindent Then the target vector 
 $   (1, 1, \cdots, 1, {\bf{0_{(d+1)^{k-1}}}}) $  must be  in the  span  of all vectors in  (\ref{basis1}) and (\ref{basis2}). Therefore we conclude that  a  $k$-uniform access structure $\Gamma$  can be accepted by this monotone span program, then  it has an efficient linear secret sharing scheme with total share size $m_k +(d+1)^{k-1}(m_1+\cdots+m_{k-1})$.
\end{proof}

 Also, we investigate  a linear secret sharing scheme for  $k$-partite $k$-uniform hypergraphs satisfying the following condition  based on the monotone span programs.  We utilize the following lemma for constructing more efficient linear secret sharing schemes for dense $k$-partite $k$-uniform hypergraphs in Section $6$,
when the size of all $k$ parts is the same.

\begin{lemma}\label{r:lem3} Let $\mathcal{G}(V,E)$ be a $k$-partite $k$-uniform hypergraph, where $V$ is a set of vertices and $E$ is a set of hyperedges satisfying the following condition.
 Suppose that $V$ is partitioned into  $ A_1 \cup \cdots, \cup A_k$ with $|A_1|= \cdots =|A_{k-1}|=n, |A_k|=m_k\leq n$.
Let $E$ be the family of subsets with exactly one vertex in common with each $A_i$ as follows.
 $$ E = \{ (a_{1,i_1},  \cdots, a_{k, i_k}) \ | \  a_{1, i_1} \in A_1, \cdots, \  a_{k,i_k}\in A_k \}.$$ 
 Suppose that every vertex in $A_k$ is contained in at least $n-d$ members in $E$ for some $ d\leq n$. Then there exists a linear secret sharing scheme for  a  $k$-uniform access structure $\Gamma'$ determined by $\mathcal{G}(V,E)$ with total share size $ 2m_k +(d+1)^{k-1}(k-1)n$.
\end{lemma}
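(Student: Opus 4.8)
The plan is to construct a monotone span program accepting $\Gamma'$ in the spirit of Lemma~\ref{r:lem2}, but with the acceptance polarity reversed, so that a tuple is accepted precisely when it is a \emph{present} hyperedge rather than a forbidden one. First I would recast the density hypothesis in the box form used in Lemma~\ref{r:lem1}: for each $a_{k,i_k}$ there are excluded sets $S_j^{(i_k)}\subseteq A_j$ with $|S_j^{(i_k)}|\le d$ (so at least $n-d$ admissible choices in each coordinate $j$), and $(a_{1,i_1},\ldots,a_{k,i_k})\in E$ exactly when $\alpha_{j,i_j}\notin S_j^{(i_k)}$ for every $j$. I would reuse the spaces $V_{j,i_j}\subseteq\mathbf{F}^{(d+1)^{k-1}}$ of Lemma~\ref{r:lem1} (polynomials divisible by $(X_j-\alpha_{j,i_j})$) for $A_1,\ldots,A_{k-1}$, and attach to each $a_{k,i_k}$ the polynomial $z_{k,i_k}(X_1,\ldots,X_{k-1})=\prod_{j=1}^{k-1}\prod_{\beta\in S_j^{(i_k)}}(X_j-\beta)$, whose coefficient vector satisfies $z_{k,i_k}(\vec{\alpha})\ne 0$ iff the tuple is a present hyperedge.

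The key algebraic fact I would isolate is that $\sum_{j=1}^{k-1}V_{j,i_j}$ is exactly the hyperplane $\{P:P(\vec{\alpha})=0\}$ inside the per-variable degree-$\le d$ polynomial space $\mathbf{F}^{(d+1)^{k-1}}$, via a telescoping identity writing any $P$ with $P(\vec{\alpha})=0$ as a sum of multiples of the $(X_j-\alpha_{j,i_j})$. Hence $z_{k,i_k}$ lies in this hyperplane iff $z_{k,i_k}(\vec{\alpha})=0$, i.e.\ iff the tuple is forbidden. I would then work over $\mathbf{F}^{k+(d+1)^{k-1}}$ with a ``part block'' of $k$ coordinates and a ``polynomial block'', giving each $a_{j,i_j}$ ($j<k$) the padded basis of $V_{j,i_j}$ together with the indicator row $e'_j=(e_{k-j+1},\mathbf{0})$, and giving each $a_{k,i_k}$ \emph{two} rows: an indicator row $(e_1,\mathbf{0})$ and a separate test row $(\mathbf{0},z_{k,i_k})$. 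The target is $(1,\ldots,1,\mathbf{1})$, where $\mathbf{1}$ is the coefficient vector of the constant polynomial $1$.

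The verification then splits cleanly. For correctness, a present hyperedge realizes all $k$ part-coordinates with coefficient $1$ on each indicator row, and in the polynomial block one writes $\mathbf{1}=c\,z_{k,i_k}+w$ with $w\in\sum_jV_{j,i_j}$ and $c=1/z_{k,i_k}(\vec{\alpha})$; this is legitimate since the reconstruction coefficient $c$ depends only on the qualified set, and crucially the free scalar $c$ on the test row is decoupled from the forced coefficient $1$ on the indicator row — which is exactly why $a_{k,i_k}$ needs two rows, and whence the summand $2m_k$. For privacy, a set missing some part $A_j$ cannot realize the corresponding target coordinate, while a forbidden tuple has $z_{k,i_k}$ inside $\{P(\vec{\alpha})=0\}$, a hyperplane that misses $\mathbf{1}$ because the constant polynomial evaluates to $1\ne 0$ everywhere; so the target is unspanned. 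Counting rows gives at most $(d+1)^{k-1}$ per vertex of $A_1\cup\cdots\cup A_{k-1}$ and $2$ per vertex of $A_k$, i.e.\ total share size at most $2m_k+(d+1)^{k-1}(k-1)n$, and invoking the Karchmer--Wigderson monotone span program lemma converts the program into the desired linear scheme.

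The step I expect to be the genuine obstacle is the polarity reversal itself: a monotone span program can only certify that the target lies \emph{in} a span, a vanishing-type condition, whereas the dense structure demands acceptance exactly when $z_{k,i_k}(\vec{\alpha})$ is \emph{nonzero}. The device that resolves this is taking the polynomial-block target to be the constant polynomial (nonvanishing at every evaluation point), so that a forbidden tuple's $z_{k,i_k}$, living on the vanishing hyperplane, provably cannot help span it, while a present tuple's $z_{k,i_k}$ lies off the hyperplane and completes the span after the rescaling by $1/z_{k,i_k}(\vec{\alpha})$. Making this rescaling compatible with the rigid part-block bookkeeping is what forces the second row per $A_k$-vertex; the one remaining check is that the only sets in which several same-part vertices could counterfeit $\mathbf{1}$ (since, e.g., $V_{1,i}+V_{1,i'}$ already contains the constants) have size at least $k+1$ and are therefore qualified, which is precisely where the size-$(\ge k+1)$ convention of $k$-uniform access structures is used.
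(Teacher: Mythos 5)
Your proposal is correct and follows essentially the same route as the paper: pass to the complement $\overline{E}$, apply Lemma~\ref{r:lem1} to it, and build a monotone span program with $(d+1)^{k-1}$ rows per vertex of $A_1,\dots,A_{k-1}$ and two rows (one indicator, one test row carrying $z_{k,i_k}$) per vertex of $A_k$, yielding the stated total share size $2m_k+(d+1)^{k-1}(k-1)n$. Your one substantive addition is the concrete, hyperedge-independent choice of the polynomial-block target as the constant polynomial $\mathbf{1}$ together with the identity $\sum_{j}V_{j,i_j}=\{P:P(\alpha_{1,i_1},\dots,\alpha_{k-1,i_{k-1}})=0\}$, which makes rigorous the single vector $w$ that the paper only asserts to exist and appears to select per hyperedge.
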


\begin{proof} Let $\mathcal{U}$ be the family of all subsets with exactly one vertex in common with each part $A_i$, where $1\leq i \leq k$. First, let us consider
the complement of a family $E$, which is denoted by $\overline{E}$, consisting of all subsets in the given  universal family $\mathcal{U}$ that are not in $E$.  Since every vertex in $A_k$ is contained in at least $n-d$ members in $E$,  every vertex in $A_k$ must be contained in at most $d$ members in $\overline{E}$.\\

\noindent Using Lemma~\ref{r:lem1}, there exist a  linear space $V_{j,i_j} \subseteq \mathbf{F}^{(d+1)^{k-1}}$ corresponding to a vertex $ a_{j,i_j}$ in  $A_j$,  where $1\leq i_j \leq m_j$, $ 1\leq j \leq k-1$, and a vector $z_{k,i_k} \in  \mathbf{F}^{(d+1)^{k-1}} $ corresponding to a vertex $a_{k,i_k}$ in the set $A_k$
  such that
\begin{align}\label{lem3:basis}
z_{k,i_k} \in V_{1,i_1},  \cdots,  z_{k,i_k} \in V_{k-1,i_{k-1}} \  \Longleftrightarrow  \  \left(a_{1, i_1},  \cdots,  a_{k-1,i_{k-1}},  a_{k,i_k} \right) \in \overline{E}. 
\end{align}
\noindent where $ 1 \leq i_1, \cdots, i_{k-1} \leq n$, $1 \leq i_k \leq m_k$.\\

Let $\Gamma'$ be  a  $k$-uniform access structure  determined by $\mathcal{G}(V,E)$.
 Now we construct a monotone span program accepting this   $k$-uniform access structure $\Gamma'$ using $(d+1)^{k-1}$ rows labeled by $a_{j,i_j}$, where $1
\leq j \leq k-1$ and $1 \leq i_j\leq n$,  and two rows labeled by $a_{k,i_k}$, where $1 \leq i_k\leq m_k$.  Let us denote  the basis of the linear space $V_{j,i_j} \subseteq \mathbf{F}^{(d+1)^{k-1}}$ as $\{ v_{j, i_j,1},  \cdots, v_{j, i_j,(d+1)^{k-1}-1}\}$, where $1
\leq j \leq k-1$. \\

\noindent To construct $(d+1)^{k-1}$ rows labeled by $a_{j,i_j}$, where $1\leq j \leq k-1$ and $1 \leq i_j\leq n$, we consider the following  vectors in  $ \mathbf{F}^{(d+1)^{k-1}+k}$ 
\begin{align}\label{lem3:basis1}
\{ v'_{j,i_j,1},  \cdots, v'_{j,i_j,(d+1)^{k-1}-1}, e_j'= (e_{k-j+1}, 0, \cdots, 0) \}.
\end{align}
\noindent where $v'_{j, i_j,l} = (0,\cdots,0,v_{j,i_j,l})$ is a vector in   $ \mathbf{F}^{(d+1)^{k-1} + k}$and $e_1,  \cdots, e_k$ are standard basis vectors in $\mathbf{F}^k$.\\

\noindent To construct two rows labeled by $a_{k,i_k}$, we consider the following two vectors  in $\mathbf{F}^{(d+1)^{k-1}+k}$
\begin{align}\label{lem3:basis2}
z'_{k,i_k}= ( 0, 0, \cdots, 0, z_{k,i_k}), \ \ \ (1,0, \cdots, 0, {\bf{0_{(d+1)^{k-1}}}})
\end{align}
\noindent   where ${\bf{0_{(d+1)^{k-1}}}}$ is a zero vector in $ \mathbf{F}^{(d+1)^{k-1}}$.\\

\noindent Let us set a target vector  in $\mathbf{F}^{(d+1)^{k-1}+ k}$ as $$ (1,1, \cdots, 1, w) $$

\noindent   for some vector $w$ in $ \mathbf{F}^{(d+1)^{k-1}}$ which is not in 
all linear spaces $V_{j,i_j}$, where $1 \leq i_j \leq m_j$, $1\leq j\leq k-1$.\\

\noindent Using the equation~(\ref{lem3:basis}), $\left(a_{1, i_1},  \cdots,  a_{k-1,i_{k-1}},  a_{k,i_k} \right) \in E$ is equivalent to the  following statement
$$ z_{k,i_k}\not \in V_{1,i_1}, \cdots, z_{k,i_k} \not \in V_{k-1,i_{k-1}}.$$

\noindent Now  let us  consider a vector $w$   in the  span  of $\{ z_{k,i_k},  V_{1,i_1},   \cdots, V_{k-1,i_{k-1}} \}$, where $w \not \in V_{j,i_j}$  for every $1\leq j\leq k-1$ and $ 1 \leq i_j \leq n$. Then
the target vector 
 $   (1,1, \cdots, 1, w) $  must be  in the  span  of all vectors in (\ref{lem3:basis1}) and (\ref{lem3:basis2}). Therefore we conclude that a  $k$-uniform access structure $\Gamma'$, which is determined by $\mathcal{G}(V,E)$, can be accepted by this monotone span program, then  it has an efficient linear secret sharing scheme with total share size $2m_k +(d+1)^{k-1}(k-1)n$.
\end{proof}

\section{Proof of Theorem~\ref{main:mainthm1} }

In this section, we prove Theorem~\ref{main:mainthm1} by providing efficient constructions 
 on the share size of linear secret sharing schemes for sparse   $k$-uniform access structures   for a constant $k$. 
 To prove Theorem~\ref{main:mainthm1}, we utilize 
  the technique of hypergraph decomposition in which secret sharing schemes for $k$-uniform hypergraphs are obtained as a union of the secret sharing schemes for all  $k$-partite $k$-uniform sub-hypergraphs. 
We need the following lemma for constructing more efficient linear secret sharing schemes for spare $k$-partite $k$-uniform hypergraphs 
when the size of all $k$ parts is the same.\\

\begin{lemma}\label{main:lem1}Let $\mathcal{H}(V,E)$ be a $k$-partite $k$-uniform hypergraph, where $V$ is a set of vertices and $E$ is a set of hyperedges satisfying the following condition. Suppose that $V$ is partitioned into  $ A_1 \cup \cdots \cup A_k$ with $|A_i|=m_i$.
Let $E$ be the family of subsets with exactly one vertex in common with each $A_i$ as follows.
 $$ E = \{ (a_{1,i_1},  \cdots, a_{k, i_k}) \ | \  a_{1, i_1} \in A_1,\   \cdots, \  a_{k,i_k}\in A_k \}.$$
Suppose that $m_1= \cdots = m_{k-1} =n, \ m_k \leq n$, and
  every vertex in $A_k$ is contained in at most $d$ members in $E$ for some $ d\leq n$. 
If $d|A_k|^{k-1} \geq n^{k-1} \log^{k^2-2k+2} n$, then there exists a linear secret sharing scheme for  a $k$-uniform access structure $\Gamma$, which is determined by $\mathcal{H}(V,E)$,  with total share size $$O(\sqrt[k^2-2k+2]{n^{-k+1}|A_k|^{k^2-3k+3}d^{(k-1)^2}} \log^{k-1} n).$$ 
\end{lemma}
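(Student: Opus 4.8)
The plan is to sharpen Lemma~\ref{r:lem2} by first refining the parts $A_1,\dots,A_{k-1}$ at random and then optimizing the number of blocks. Fix an integer $N$ to be chosen later and, independently for each $j\in\{1,\dots,k-1\}$, draw a uniformly random map $\psi_j\colon A_j\to\{1,\dots,N\}$, splitting $A_j$ into $N$ blocks of expected size $n/N$. For each tuple $\vec t=(t_1,\dots,t_{k-1})\in\{1,\dots,N\}^{k-1}$ let $\mathcal H_{\vec t}$ be the $k$-partite sub-hypergraph on the blocks indexed by $\vec t$ together with all of $A_k$, keeping those $e\in E$ whose $A_j$-endpoint is sent to $t_j$ for every $j$. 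Every hyperedge lies in exactly one $\mathcal H_{\vec t}$, so $\{\mathcal H_{\vec t}\}_{\vec t}$ is a decomposition of $\mathcal H$ and it suffices to realize each piece and add up the share sizes.

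The gain is a reduced degree inside each piece. A vertex $a\in A_k$ meets at most $d$ hyperedges of $\mathcal H$, and such a hyperedge survives in $\mathcal H_{\vec t}$ only when its $k-1$ endpoints outside $A_k$ are all mapped correctly, which happens with probability $N^{-(k-1)}$; hence $a$ has expected degree at most $d':=d/N^{k-1}$ in $\mathcal H_{\vec t}$. As in the proof of Lemma~\ref{lemma:hyper}, I would apply a Chernoff bound to each of the at most $|A_k|\,N^{k-1}\le n^{k}$ pairs $(a,\vec t)$ and a union bound, fixing maps $\psi_j$ for which \emph{every} $a\in A_k$ has degree $O(d')$ in \emph{every} $\mathcal H_{\vec t}$. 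This is exactly where the hypothesis enters: $d|A_k|^{k-1}\ge n^{k-1}\log^{k^2-2k+2}n$ is equivalent to $\big(d|A_k|^{k-1}/n^{k-1}\big)^{1/(k^2-2k+2)}\ge\log n$, and for the $N$ chosen below this lower–bounds the per–piece mean $d'$ by $\Omega(\log n)$, which is what makes the concentration survive the union bound. I expect this to be one of the two main obstacles, because the indicators that the $\le d$ hyperedges through a fixed $a$ survive are \emph{not} independent — hyperedges through $a$ may share endpoints in $A_1,\dots,A_{k-1}$ — so one must either expose the randomness endpoint–by–endpoint and use a bounded–differences estimate, or group the hyperedges so that a genuine Chernoff bound applies.

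Granting the uniform degree bound $d'$, Lemma~\ref{r:lem2} applied to $\mathcal H_{\vec t}$ (its first $k-1$ parts of size $O(n/N)$, its last part $A_k$) costs $O\!\big(|A_k|+(d'+1)^{k-1}(k-1)(n/N)\big)$, and summing over the $N^{k-1}$ pieces gives
\[
O\!\left(|A_k|\,N^{k-1}+n\,d^{\,k-1}\,N^{-(k^2-3k+3)}\right),
\]
up to the polylogarithmic slack in $(d'+1)^{k-1}$, which contributes the $\log^{k-1}n$ factor. The two summands are increasing and decreasing in $N$, so balancing at $N^{\,k^2-2k+2}\asymp n\,d^{\,k-1}/|A_k|$ already reproduces the stated exponents of $|A_k|$ and $d$ and the $(k^2-2k+2)$-th root. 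The delicate point — and the second main obstacle — is the exponent of $n$: a literal disjoint union charges each vertex of $A_k$ one row in every piece it meets, i.e.\ the term $|A_k|\,N^{k-1}$, whose balance carries a surplus power of $n$. To reach the stated $n^{-(k-1)}$ I would not take a disjoint union but instead assemble a \emph{single} global monotone span program, as in Lemma~\ref{r:lem2}, in which each $a\in A_k$ contributes only $O(1)$ rows (carrying its reduced, block–wise neighbourhood data) while the vertices of $A_1,\dots,A_{k-1}$ carry the low–dimensional spaces of Lemma~\ref{r:lem1} at the reduced degree $d'$; verifying that this consolidated program still accepts exactly the qualified sets, and that the optimal $N$ is a feasible integer keeping $d'=\Omega(\log n)$, would complete the argument.
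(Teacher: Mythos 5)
Your proposal is essentially the paper's own argument: randomly refine $A_1,\dots,A_{k-1}$ into roughly $n^{1-\alpha}$ pieces, use a Chernoff-plus-union bound to get a uniform per-piece degree $d'=O(d/N^{k-1})$ for every vertex of $A_k$ (this is exactly where the hypothesis $d|A_k|^{k-1}\ge n^{k-1}\log^{k^2-2k+2}n$ is consumed, as you say), apply Lemma~\ref{r:lem2} to each piece, sum, and balance. The paper uses a random cover by $l=2n^{1-\alpha}\ln n$ subsets of size $n^{\alpha}$ rather than a random partition into $N$ blocks, which is immaterial, and it handles the dependence issue you correctly flag by dominating the conditional survival probabilities $p_x$ with independent Bernoulli variables before applying Chernoff, which is one of the two remedies you suggest. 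The one substantive divergence is your final worry about the term $|A_k|N^{k-1}$: the paper does take the literal union over all $l^{k-1}$ pieces, charging $|A_k|$ rows per piece, and its own displayed intermediate bound $O\bigl(n^{(k-1)(1-\alpha)}\log^{k-1}n\,(n^{\lambda}+n^{(k-1)^2\alpha+\tau(k-1)-(k-1)^2+\alpha})\bigr)$ evaluates, at the chosen $\alpha$, to $\sqrt[k^2-2k+2]{n^{\,k-1}|A_k|^{k^2-3k+3}d^{(k-1)^2}}\log^{k-1}n$ --- i.e.\ exactly what your balancing gives, with $n^{+(k-1)}$ rather than the stated $n^{-k+1}$ under the root (for $k=2$ the stated form would give total share size $O(n^{\beta/2}\mathrm{polylog}\,n)$ for a graph with $n^{1+\beta}$ edges, which is below the trivial $\Omega(n)$ lower bound for a perfect matching, whereas the $n^{+1}$ version reproduces the known $\tilde O(n^{1+\beta/2})$ bound). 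So the ``surplus power of $n$'' is not a defect of your route relative to the paper's; the paper contains no consolidated global span program and no mechanism for charging each $A_k$-vertex only $O(1)$ rows overall, and your speculative fix is neither present in the paper nor obviously realizable. You have in fact reproduced the paper's proof; the discrepancy you detected lies in the lemma's stated exponent, not in your argument.
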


\begin{proof}
 Let  $d = n^{\tau}$ and $|A_k| = n^{\lambda} \leq n$, where $\tau = \log_n d$ and  $\lambda = \log_n |A_k|$.  From the condition 
$d|A_k|^{k-1} \geq n^{k-1} \log^{k^2-2k+2} n$, we obtain  
\begin{align}\label{condition1}
n^{\frac{\tau}{k^2-2k+2}+ \frac{(k-1)\lambda}{k^2-2k+2} - \frac{k-1}{k^2-2k+2}} \geq \log n.
\end{align}
 Let $\alpha = \frac{\lambda}{k^2-2k+2} - \frac{(k-1)\tau}{k^2-2k+2} + \frac{k^2-2k+1}{k^2-2k+2}$. In order to prove Lemma~\ref{main:lem1}, first we prove that there exists a partition of  $A_i$ into $l$ parts $A_{i,1},  \cdots, A_{i,l}$ of  size $n^{\alpha}$ for $1 \leq i \leq k-1$, where  $l = 2n^{1-\alpha}\ln n$, satisfying that
for every $1\leq i_1,  \cdots, i_k\leq l $, every vertex in $A_k$ is contained in at most  $2 n^{(k-1)\alpha +\tau-k+1}$ members in 
$$ E_{i_1,\cdots, i_{k-1}} = \{ (a_{1,i_1},  \cdots,a_{k-1, i_{k-1}} a_{k, i_k}) \ | \  a_{1, i_1} \in A_{1,i_1}, \cdots, \ a_{k-1,i_{k-1}}\in A_{k-1,i_{k-1}},  a_{k,i_k} \in A_{k} \}.$$

\noindent Now we choose $A_{1,i_1},  \cdots, A_{k-1, i_{k-1}}$   of size $n^{\alpha}$ independently with uniform distribution for every $1\leq i_1,  \cdots, i_k\leq l $. Then we have
\begin{align}\label{equation1}
Pr(A_i \neq \cup_{j=1}^l A_{i,j}) \leq \sum_{a\in A_i} Pr (a \not \in \cup_{j=1}^l A_{i,j}) & = \sum_{a \in A_i} \prod_{j=1}^l Pr(a \not \in A_{i,j}) \nonumber\\ & = \sum_{a\in A_i} \left(1 - \frac{n^{\alpha}}{n} \right)^l \leq \sum_{a\in A_i} e^{-\frac{l}{n^{1-\alpha}}} = n \cdot \frac{1}{n^2} = \frac{1}{n}
\end{align}
\noindent for every $1\leq i \leq k-1$.\\

\noindent Let  $x_{(i_1, \cdots, i_k)}$ be a value  in $X_{(i_1,\cdots, i_k)}$. 
For every vector $x=(x_{(j_1, \cdots,j_k)})_{(j_1, \cdots,j_k) \neq (i_1,\cdots,i_k)}$, let us consider 
$$p_x = Pr \left(X_{(i_1, \cdots, i_k)} = 1 \  | \ X_{(j_1, \cdots,j_k)}= x_{(j_1, \cdots,j_k)} {\text \ for \ all \ } (j_1, \cdots,j_k) \neq (i_1, \cdots,i_k ) \right).$$

\noindent From the equation  $n^{\frac{\tau}{k^2-2k+2}+ \frac{(k-1)\lambda}{k^2-2k+2} - \frac{k-1}{k^2-2k+2}} \geq \log n$, we obtain that

$$n^{\alpha} = n^{\frac{\lambda}{k^2-2k+2} - \frac{(k-1)\tau}{k^2-2k+2} + \frac{k^2-2k+1}{k^2-2k+2}} \leq \frac{n^{\lambda}}{\log^{k-1} n} < \frac{n}{k-1}.$$

\noindent It means that
$$n^{k-1}-n^{(k-1)\alpha} \leq \frac{k^2-2k}{k^2-2k+1} n^{k-1}.$$

\noindent Then we have 
$$p_x \leq \frac{n^{\tau}}{n^{k-1}-n^{(k-1)\alpha}} \leq \frac{k^2-2k+1}{k^2-2k}\frac{1}{n^{k-1-\tau}}.$$

\noindent 
Now  let us define the independent random variables as follows.
 
\begin{align*}
X'_{(i_1,\cdots, i_k)} = \left\{ \begin{array}{lll}
1 &   \mbox{if}   \ x_{(i_1,\cdots,i_k)}=1 \\ 1  {\text \ with \ probability \ } \frac{ \left(\frac{k^2-2k+1}{k^2-2k}\frac{1}{n^{k-1-\tau}} -p_x\right)}{(1-p_x)}   & \mbox{if}   \ x_{(i_1,\cdots,i_k)}= 0  \\ 0 & \mbox{otherwise} & \\
\end{array}\right.
\end{align*}

\noindent Then we obtain that 
\begin{align*}
 & Pr \left(X'_{(i_1, \cdots,i_k)} = 1 \  | \ X_{(j_1, \cdots, j_k)}= x_{(j_1, \cdots, j_k)} {\text \ for \ all \ } (j_1, \cdots, j_k) \neq (i_1, \cdots,i_k) \right) \\
 &= \frac{k^2-2k+1}{k^-2k}\frac{1}{n^{k-1-\tau}} .
 \end{align*}

\noindent  Then we have the expectation of the random variable  $X' = \sum_{i_1=1}^{n^{\alpha}} $$\cdots$ $\sum_{i_{k-1}=1}^{n^{\alpha}}$ $  X'_{(i_1, \cdots,i_{k-1})}$.
$$\mathbb{E}(X')= \frac{k^2-2k+1}{k^2-2k} n^{(k-1)\alpha}\frac{1}{n^{k-1-\tau}} = \frac {k^2-2k+1}{k^2-2k} n^{(k-1)\alpha + \tau-k+1}.$$

\noindent From the equation  $n^{\frac{\tau}{k^2-2k+2}+ \frac{(k-1)\lambda}{k^2-2k+2} - \frac{k-1}{k^2-2k+2}} \geq \log n,$ we obtain that \\
$$ n^{(k-1)\alpha+\tau-k+1}  = n^{\frac{\tau}{k^2-2k+2}+ \frac{(k-1)\lambda}{k^2-2k+2} - \frac{k-1}{k^2-2k+2}} \geq \log n.$$

\noindent By applying a chernoff bound to the random variable $X'$,  we conclude that
\begin{align}\label{equation2}
Pr(X > 2 n^{(k-1)\alpha +\tau-k+1}) \leq Pr(X' > 2 n^{(k-1)\alpha +\tau-k+1} ) \leq 2^{-(2 n^{(k-1)\alpha +\tau-k+1})} \leq 2^{-2 \log n} = \frac{1}{n^{2}}. 
\end{align}

\noindent Using the equations $(\ref{equation1})$ and $(\ref{equation2})$,  
there exist $A_{i,1},  \cdots, A_{i,l} \subset A_i$ of size $n^{\alpha}$ for $1 \leq i \leq k-1$, where  $l = 2n^{1-\alpha}\ln n$, such that the following holds:
 (1) $\bigcup_{j=1}^{l} A_{i,j} = A_i$ \   for \ $1 \leq i \leq k-1$
 (2) For every $1\leq i_1,  \cdots, i_k\leq l $, every vertex in $A_k$ is contained in at most  $2 n^{(k-1)\alpha +\tau-k+1}$ members in 
$ E_{i_1,\cdots, i_{k-1}} = \{ (a_{1,i_1}, \cdots, a_{k-1, i_{k-1}} a_{k, i_k}) \ | \  a_{1, i_1} \in A_{1,i_1}, \cdots, \ a_{k-1,i_{k-1}}\in A_{k-1,i_{k-1}},  a_{k,i_k} \in A_{k} \}.$ \\

Now we are ready to prove Theorem~\ref{main:lem1}. 
\noindent Apply  Lemma~\ref{r:lem2} with  $ E_{i_1,\cdots, i_{k-1}}$. Then 
 we conclude that
there exists a linear secret sharing scheme for  a $k$-uniform access structure $\Gamma$, which is determined by $\mathcal{H}(V,E)$,  with the following total share size 
\begin{align*}
&\sum_{i_1=1}^{l} \cdots \sum_{i_{k-1}=1}^{l}  \left(|A_k| +\left(2n^{(k-1)\alpha+\tau-k+1}+1\right)^{k-1}\left( |A_{1,i_1}|+ \cdots |A_{k-1,i_{k-1}}| \right) \right) \\
& = O \left(n^{(k-1)(1-\alpha)} \log^{k-1} n\left( n^{\lambda} + n^{(k-1)^2\alpha+\tau(k-1)-(k-1)^2+\alpha} \right)\right) \\
& = O\left(\sqrt[k^2-2k+2]{n^{-k+1}|A_k|^{k^2-3k+3}d^{(k-1)^2}} \log^{k-1} n \right).
\end{align*}
\noindent  This completes the proof of Lemma~\ref{main:lem1}.
\end{proof}

\noindent Using Lemma~\ref{main:lem1}, we give the following efficient linear secret sharing scheme for 
sparse  $k$-uniform access structure when the size of all $k$ parts is same.

\begin{lemma}\label{main:lem2}
Let $\mathcal{G}(V,E)$ be a $k$-partite $k$-uniform hypergraph, where $V$ is a set of vertices and $E$ is a set of hyperedges satisfying the following condition.
Suppose that $V$ is partitioned into  $ A_1 \cup \cdots \cup A_k$ with $|A_i|=m_i$.
Let $E$ be the family of subsets with exactly one vertex in common with each $A_i$ as follows.
 $$ E = \{ (a_{1,i_1},  \cdots, a_{k, i_k}) \ | \  a_{1, i_1} \in A_1,\   \cdots, \  a_{k,i_k}\in A_k \}.$$
 Suppose that $m_1= \cdots = m_{k} =n$ and there are at most $n^{1+\beta}$ subsets for some $0 \leq \beta < 1$ in $E$.
Then there exists a linear secret sharing scheme for a sparse $k$-uniform access structure $\Gamma$, which is determined by $\mathcal{G}(V,E)$,  with total share size $$O(n^{\frac{k^2-3k+2}{k^2-2k+2}+ \left(\frac{k^2-3k+3}{k^2-2k+2}\right)\beta} \log^{k} n ).$$ 
\end{lemma}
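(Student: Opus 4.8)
The plan is to realize $\Gamma$ by splitting the part $A_k$ according to the degrees of its vertices and applying Lemma~\ref{main:lem1} to each degree class, using the global edge bound $|E|\le n^{1+\beta}$ to control the sum. The decisive preliminary step is to force every degree down to at most $n$: if a vertex $a_{k,i}\in A_k$ lies in more than $n$ hyperedges, I would split it into $\lceil \deg(a_{k,i})/n\rceil$ copies, each inheriting at most $n$ of its hyperedges, and give the original participant the concatenation of the shares of all its copies. The total number of extra copies is at most $\sum_{v}\deg(v)/n\le |E|/n\le n^{\beta}\le n$, so the new part $A_k$ still has size $O(n)$ and every vertex of $A_k$ now has degree at most $n$; if one insists on the bound $|A_k|\le n$ of Lemma~\ref{main:lem1}, cut $A_k$ into two blocks and treat them separately at the cost of a factor $2$.

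Next I would group the (split) vertices of $A_k$ into the $O(\log n)$ dyadic classes $B_0,B_1,\dots$, where $B_j$ collects the vertices of degree in $[2^{j},2^{j+1})$; write $t_j=|B_j|$ and $d_j=2^{j+1}\le 2n$. For each $j$, form the sub-hypergraph $\mathcal{G}_j$ on $A_1\cup\cdots\cup A_{k-1}\cup B_j$ with the incident hyperedges; it satisfies the hypotheses of Lemma~\ref{main:lem1} with $m_1=\cdots=m_{k-1}=n$, $|A_k|=t_j$, and degree bound $d_j$. Applying Lemma~\ref{main:lem1} to each $\mathcal{G}_j$ whose density hypothesis $d_j t_j^{k-1}\ge n^{k-1}\log^{k^2-2k+2}n$ holds, and taking the union of the resulting schemes (which realizes the union access structure since the classes cover all hyperedges), yields a linear secret-sharing scheme for $\Gamma$ of total share size at most $O(\log n)$ times $\max_j\big(n^{-(k-1)} t_j^{\,k^2-3k+3} d_j^{\,(k-1)^2}\big)^{1/(k^2-2k+2)}\log^{k-1}n$, the extra $\log n$ counting the classes. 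This is exactly how the $\log^{k}n$ of the statement is produced from the $\log^{k-1}n$ of Lemma~\ref{main:lem1}.

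The crux is then the arithmetic claim that every class respects the target exponent. Writing $t_j=n^{\lambda}$ and $d_j=n^{\tau}$, the two inputs are the edge budget $\sum_j t_j d_j\le 2|E|$, which forces $\lambda+\tau\le 1+\beta$ class by class, and the degree cap $\tau\le 1$ secured above. Taking logarithms base $n$, the inequality to be proved for a single class is
\[ (k^2-3k+3)\lambda+(k-1)^2\tau\ \le\ (k-1)^2+(k^2-3k+3)\beta, \qquad\text{i.e.}\qquad (k-1)^2(\tau-1)\ \le\ (k^2-3k+3)(\beta-\lambda). \]
From $\lambda\le 1+\beta-\tau$ one gets $\beta-\lambda\ge \tau-1$, and since $\tau-1\le 0$ while $(k-1)^2\ge k^2-3k+3$ for every $k\ge 2$, it follows that $(k-1)^2(\tau-1)\le (k^2-3k+3)(\tau-1)\le (k^2-3k+3)(\beta-\lambda)$, which is precisely the claim. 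Hence each class contributes at most the claimed exponent, and the bound is tight at the balanced point $t_j=n^{\beta},\ d_j=n$, which reproduces exactly the exponent in the statement.

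The step I expect to be the main obstacle is the treatment of the classes that violate the density hypothesis of Lemma~\ref{main:lem1}, namely those with $d_j t_j^{k-1}<n^{k-1}\log^{k^2-2k+2}n$, together with the accounting of the vertex-splitting. For such sparse classes Lemma~\ref{main:lem1} is unavailable, so one must fall back on a direct monotone span program — Lemma~\ref{r:lem2} applied to $\mathcal{G}_j$ after discarding from $A_1,\dots,A_{k-1}$ all vertices not incident to $B_j$ — and then check that the resulting cost $t_j+(d_j+1)^{k-1}(k-1)\cdot(\text{number of incident vertices})$ is still dominated by the target exponent. Verifying that these residual sparse classes never overtake the balanced contribution isolated in the previous paragraph is the delicate part of the argument, and it is here that the hypotheses $\beta<1$ and the constancy of $k$ genuinely enter.
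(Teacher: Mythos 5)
Your proposal follows essentially the same route as the paper's own proof: a dyadic partition of $A_k$ into $O(\log n)$ classes according to vertex degree, the bound $|{A_k}^{(s)}|\le 2^{s+1}n^{\beta}$ extracted from the global edge budget $|E|\le n^{1+\beta}$, an application of Lemma~\ref{main:lem1} to each class, and a union over the classes supplying the extra factor $\log n$; your exponent arithmetic matches the paper's substitution of $|{A_k}^{(s)}|\le 2^{s+1}n^{\beta}$ and $d=n/2^{s}$ into the bound of Lemma~\ref{main:lem1}. The two points you flag as obstacles --- vertices of $A_k$ of degree exceeding $n$, and classes violating the density hypothesis $d|A_k|^{k-1}\ge n^{k-1}\log^{k^2-2k+2}n$ of Lemma~\ref{main:lem1} --- are passed over in silence in the paper, which applies Lemma~\ref{main:lem1} to every class using only the \emph{upper} bound on $|{A_k}^{(s)}|$; your vertex-splitting device and your explicit verification of the inequality $(k^2-3k+3)\lambda+(k-1)^2\tau\le(k-1)^2+(k^2-3k+3)\beta$ are more careful than what is written there, while the residual sparse classes remain an unresolved corner case in both treatments.
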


\begin{proof}

Let us consider a partition  of the participants in $A_k$ into $ \log n$ sets according to the number of sets in $E$ containing each participant. Let us define the $s$-th set ${A_{k}}^{(s)}$ as
$${A_{k}}^{(s)} = \{  \ v \in \mathcal{P} \ | \ \frac{n}{2^{s+1}} \leq {\text \ number\ of \ members \ in \ E \ containing  \ a \ participant \ v \ }  \leq \frac{n}{2^s} \}$$
\noindent for $s=0,1, \cdots, \log n -1$. 
Since there are at most $n^{1+\beta}$ subsets for some $0 \leq \beta < 1$ in $Q$ and
 the  number of  members  in  $E$  containing  every participant
  in the $s$-th set ${A_{k}}^{(s)}$ is at least $\frac{n}{2^{s+1}}$,
we derive that the number of participants in the $s$-th set ${A_{k}}^{(s)}$ is at most $\frac{n^{1+\beta}}{\frac{n}{2^{s+1}}} = 2^{s+1}n^{\beta}$
for $s=0,1, \cdots, \log n -1$.\\

\noindent If we apply Lemma~\ref{main:lem1} with
$$ Q_s = \{ (a_{1,i_1}, \cdots, a_{k-1, i_{k-1}}, a_{k, i_k}) \ | \  a_{1, i_1} \in A_1,\  \cdots, \  a_{k-1,i_{k-1}} \in A_{k-1},  a_{k,i_k}\in {A_{k}}^{(s)} \},$$
 where $|A_1|=  \cdots = |A_{k-1}| = n $ and $|{A_{k}}^{(s)}| \leq 2^{s+1}n^{\beta}$, then we conclude that
there exists a linear secret sharing scheme for  a  $k$-uniform access structure $\Gamma$, which is determined by $\mathcal{G}(V,E)$, with the following total share size 

\begin{align*}
&  O\left(\sqrt[k^2-2k+2]{n^{-k+1}|{A_{k}}^{(s)}|^{k^2-3k+3}d^{(k-1)^2}} \log^{k-1} n \right) \times \log n\\
 =\  & O\left(\sqrt[k^2-2k+2]{n^{-k+1}|2^{s+1}n^{\beta}|^{k^2-3k+3}\left(\frac{n}{2^s}\right)^{(k-1)^2}} \log^{(k-1)} n \right) \times \log n\\
 =\  & O\left(\sqrt[k^2-2k+2]{n^{k^2-3k+2} n^{{\beta}(k^2-3k+3)}}
 \log^{k} n \right).
\end{align*}

\noindent This completes the proof of Lemma~\ref{main:lem2}.
\end{proof}

\noindent To prove Theorem~\ref{main:mainthm1}, 
now we utilize 
  the technique of hypergraph decomposition described in Section $4$. Using Lemma~\ref{lemma:hyper}, 
  we obtain that every $k$-uniform hypergraph can be decomposed into the set of sub-hypergraphs consisting of $O(\log n)$ $k$-partite $k$-uniform hypergraphs. It means that 
$k$-uniform hypergraph is covered by
  $k$-partite $k$-uniform hypergraphs of size $O(\log n)$.
Let us consider the collection of the sets of participants into $k$ parts $A_1^t, \cdots, A_k^t$ for every $ 1 \leq t \leq O(\log n)$. For every $ 1 \leq t \leq O(\log n)$,
let us define the family $E_t$ of subsets with exactly one vertex in common with each $A_i^t$ as 
 $$ E_t = \{ (a_{1,i_1},  \cdots, a_{k, i_k}) \ | \  a_{1, i_1} \in A_1^t,\   \cdots, \  a_{k,i_k}\in A_k^t \},$$
 
 \noindent where $|A_1^t|=\cdots =|A_k^t|$.\\

Applying  Lemma~\ref{main:lem2} with  $ E_t$,
 we conclude that
there exists a linear secret sharing scheme for  a  $k$-uniform access structure $\Gamma$, which is determined by $E = \bigcup_{t=1}^{O(\log n)} E_t $,  with the following total share size

\begin{align*}
 O\left(\sqrt[k^2-2k+2]{n^{k^2-3k+2} n^{{\beta}(k^2-3k+3)}}
 \log^{k+1} n \right).
\end{align*}

\noindent This completes the proof of Theorem~\ref{main:mainthm1}.

\section{Proof of Theorem~\ref{main:mainthm2} }

In this section, we prove Theorem~\ref{main:mainthm2} by providing efficient constructions 
 on the share size of linear secret sharing schemes for dense   $k$-uniform access structures for a constant $k$. 
 To prove Theorem~\ref{main:mainthm2}, we utilize 
  the technique of hypergraph decomposition in which secret sharing schemes for $k$-uniform hypergraphs are obtained as a union of the secret sharing schemes for all  $k$-partite $k$-uniform sub-hypergraphs. 
We need the following lemma for constructing more efficient linear secret sharing schemes for dense $k$-partite $k$-uniform hypergraphs 
when the size of all $k$ parts is the same.\\

\begin{lemma}\label{main2:lem2}
Let $\mathcal{H}(V,E)$ be a $k$-partite $k$-uniform hypergraph, where $V$ is a set of vertices and $E$ is a set of hyperedges satisfying the following condition.
Suppose that $V$ is partitioned into  $ A_1 \cup \cdots \cup A_k$ with $|A_i|=m_i$.
Let $E$ be the family of subsets with exactly one vertex in common with each $A_i$ as follows.
 $$ E = \{ (a_{1,i_1},  \cdots, a_{k, i_k}) \ | \  a_{1, i_1} \in A_1,\   \cdots, \  a_{k,i_k}\in A_k \}.$$
 Suppose that $m_1= \cdots = m_{k-1} =n, \ m_k \leq n$, and
  every vertex in $A_k$ is contained in at least $n-d$ members in $E$ for some $ d\leq n$. 
If $d|A_k|^{k-1} \geq n^{k-1} \log^{k^2-2k+2} n$, then there exists a linear secret sharing scheme for  a  $k$-uniform access structure $\Gamma$, which is determined by $\mathcal{H}(V, E)$,  with total share size $$O(\sqrt[k^2-2k+2]{n^{-k+1}|A_k|^{k^2-3k+3}d^{(k-1)^2}} \log^{k-1} n).$$ 
\end{lemma}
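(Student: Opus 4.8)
The plan is to mirror the proof of Lemma~\ref{main:lem1} almost verbatim, replacing the sparse building block (Lemma~\ref{r:lem2}) by its dense counterpart (Lemma~\ref{r:lem3}) and passing to the complement of $E$. First I would fix the same parametrization as in the sparse case: write $d = n^{\tau}$ and $|A_k| = n^{\lambda} \le n$, and set $\alpha = \frac{\lambda}{k^2-2k+2} - \frac{(k-1)\tau}{k^2-2k+2} + \frac{k^2-2k+1}{k^2-2k+2}$. The hypothesis $d|A_k|^{k-1} \ge n^{k-1}\log^{k^2-2k+2} n$ again rewrites as $n^{(k-1)\alpha + \tau - k + 1} \ge \log n$ and guarantees $n^{\alpha} < n/(k-1)$, which are exactly the two inequalities used to control the random partition below.

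The first substantive step is complementation. Let $\mathcal{U}$ be the universal $k$-partite family of all transversals of $A_1, \ldots, A_k$ and let $\overline{E} = \mathcal{U} \setminus E$. Because every vertex of $A_k$ lies in many members of $E$, the complementation argument of Lemma~\ref{r:lem3} shows that each such vertex lies in at most $d$ members of $\overline{E}$, so $\overline{E}$ satisfies precisely the bounded-degree hypothesis of the sparse setting. I would then run the identical probabilistic decomposition of Lemma~\ref{main:lem1} on the index sets $A_1, \ldots, A_{k-1}$: choose $l = 2 n^{1-\alpha}\ln n$ random subsets of size $n^{\alpha}$ in each $A_i$, use the union-bound estimate $\Pr(A_i \ne \bigcup_{j} A_{i,j}) \le 1/n$ to see that the subsets cover, and use the conditional-probability/Chernoff argument to see that, for every choice of blocks $A_{1,i_1}, \ldots, A_{k-1,i_{k-1}}$, every vertex of $A_k$ lies in at most $2 n^{(k-1)\alpha + \tau - k + 1}$ members of $\overline{E}$ restricted to those blocks. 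Nothing in this step changes: it uses only that each vertex of $A_k$ has $\overline{E}$-degree at most $d$.

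With the decomposition in hand, I would apply Lemma~\ref{r:lem3} (rather than Lemma~\ref{r:lem2}) to each of the $l^{k-1}$ sub-hypergraphs, taking $d' = 2 n^{(k-1)\alpha + \tau - k + 1}$ and block part-size $n^{\alpha}$ in place of $n$. Lemma~\ref{r:lem3} realizes the dense structure on a block with total share size $2|A_k| + (d'+1)^{k-1}(k-1) n^{\alpha}$, which differs from the sparse per-block cost $|A_k| + (d'+1)^{k-1}(k-1)n^{\alpha}$ only by the constant in front of $|A_k|$. Summing over all $l^{k-1}$ blocks and absorbing constants therefore reproduces the same telescoping computation as in Lemma~\ref{main:lem1}, yielding total share size $O\!\left(\sqrt[k^2-2k+2]{n^{-k+1}|A_k|^{k^2-3k+3}d^{(k-1)^2}}\,\log^{k-1} n\right)$.

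I expect the only genuine obstacle to be bookkeeping rather than a new idea: one must check that the target-vector construction of Lemma~\ref{r:lem3} is applied blockwise to $\overline{E}$ consistently across blocks, so that the per-block monotone span programs union into a single scheme for $\Gamma$, and that the extra additive $2|A_k|$ and the factor of two per block are genuinely absorbed into the $O(\cdot)$ rather than accumulating. Since $|A_k| \le n$ and the number of blocks matches the sparse case exactly, this absorption goes through as before, and the bound is identical to that of Lemma~\ref{main:lem1}.
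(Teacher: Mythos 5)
Your proposal matches the paper's proof essentially step for step: pass to the complement $\overline{E}$ so that each vertex of $A_k$ has bounded degree, reuse the probabilistic block decomposition of Lemma~\ref{main:lem1} verbatim on $\overline{E}$, re-complement within each block so that every vertex of $A_k$ lies in at least $n^{\alpha} - 2n^{(k-1)\alpha+\tau-k+1}$ members of $E_{i_1,\dots,i_{k-1}}$, and then apply Lemma~\ref{r:lem3} blockwise with part-size $n^{\alpha}$, summing the per-block cost $2|A_k| + (k-1)n^{\alpha}(2n^{(k-1)\alpha+\tau-k+1}+1)^{k-1}$ over the $l^{k-1}$ blocks. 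The extra factor of $2$ on the $|A_k|$ term is absorbed into the $O(\cdot)$ exactly as you say, so the proposal is correct and takes the same route as the paper.
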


\begin{proof}
Let $\mathcal{U}$ be the family of all subsets with exactly one vertex in common with each part $A_i$, where $1\leq i \leq k$. First, let us consider
the complement of a family $E$, which is denoted by $\overline{E}$, consisting of all subsets in the given  universal family $\mathcal{U}$ that are not in $E$.  Since every participant in $A_k$ is contained in at least $n-d$ members in $E$,  every participant in $A_k$ must be contained in at most $d$ members in the complement $\overline{E}$. Now we apply Lemma~\ref{main:lem1}.\\

\noindent Let  $d = n^{\tau}$ and $|A_k| = n^{\lambda} \leq n$, where $\tau = \log_n d$ and  $\lambda = \log_n |A_k|$.  From the condition 
$d|A_k|^{k-1} \geq n^{k-1} \log^{k^2-2k+2} n$, we obtain  
\begin{align}\label{condition1:lem2}
n^{\frac{\tau}{k^2-2k+2}+ \frac{(k-1)\lambda}{k^2-2k+2} - \frac{k-1}{k^2-2k+2}} \geq \log n.
\end{align}
 Let $\alpha = \frac{\lambda}{k^2-2k+2} - \frac{(k-1)\tau}{k^2-2k+2} + \frac{k^2-2k+1}{k^2-2k+2}$. In the same way of the proof of Lemma~\ref{main:lem1}, there exist $A_{i,1},  \cdots, A_{i,l} \subset A_i$ of size $n^{\alpha}$ for $1 \leq i \leq k-1$, where  $l = 2n^{1-\alpha}\ln n$, such that the following holds:
 (1) $\bigcup_{j=1}^{l} A_{i,j} = A_i$ \   for \ $1 \leq i \leq k-1$
 (2) For every $1\leq i_1, \cdots, i_k\leq l $, every participant in $A_k$ is contained in at most  $2 n^{(k-1)\alpha +\tau-k+1}$ members in  $\overline{E}_{i_1,\cdots, i_{k-1}}\subseteq \overline{E}$, where
$ \overline{E}_{i_1,\cdots, i_{k-1}} = \{ (a_{1,i_1},  \cdots,a_{k-1, i_{k-1}}, a_{k, i_k}) \ | \  a_{1, i_1} \in A_{1,i_1}, \cdots, \ a_{k-1,i_{k-1}}\in A_{k-1,i_{k-1}},  a_{k,i_k} \in A_{k} \}.$ \\

\noindent Let us consider
the complement of the family $ \overline{E}_{i_1, \cdots, i_{k-1}}$, which is denoted by $ E_{i_1,\cdots, i_{k-1}}$, consisting of all subsets in the  universal family that are not in $ \overline{E}_{i_1,\cdots, i_{k-1}}$. Since every participant in $A_k$ is contained in at most  $2 n^{(k-1)\alpha +\tau-k+1}$ members in  $\overline{E}_{i_1,\cdots, i_{k-1}}\subseteq \overline{E}$, every participant in $A_k$ is contained in at least $n^{\alpha}-2 n^{(k-1)\alpha +\tau-k+1}$ members in  $E_{i_1, \cdots, i_{k-1}}\subseteq E.$\\

\noindent Apply  Lemma~\ref{r:lem3} with  $ E_{i_1,\cdots, i_{k-1}}$. Then 
 we conclude that
there exists a linear secret sharing scheme for  a  $k$-uniform access structure $\Gamma$, which is determined by $E$,  with the following total share size 

\begin{align*}
&\sum_{i_1=1}^{l}\sum_{i_2=1}^{l} \cdots \sum_{i_{k-1}=1}^{l}  \left(2|A_k|\ +  \ n^{\alpha}(k-1)\left(2n^{(k-1)\alpha+\tau-k+1}+1\right)^{k-1} \right) \\
& = O \left(n^{(k-1)(1-\alpha)} \log^{k-1} n\left( n^{\lambda} + n^{(k-1)^2\alpha+\tau(k-1)-(k-1)^2+\alpha} \right)\right) \\
& = O\left(\sqrt[k^2-2k+2]{n^{-k+1}|A_k|^{k^2-3k+3}d^{(k-1)^2}} \log^{k-1} n \right).
\end{align*}
\noindent  This completes the proof of Lemma~\ref{main2:lem2}.
\end{proof}

\noindent Using Lemma~\ref{main2:lem2},  we give the following efficient linear secret sharing scheme for 
dense  $k$-uniform access structure when the size of all $k$ parts is same.

\newpage

\begin{lemma}\label{main2:lem3}Let $\mathcal{G}(V,E)$ be a $k$-partite $k$-uniform hypergraph, where $V$ is a set of vertices and $E$ is a set of hyperedges satisfying the following condition.
Suppose that $V$ is partitioned into  $ A_1 \cup \cdots \cup A_k$ with $|A_i|=m_i$.
Let $E$ be the family of subsets with exactly one vertex in common with each $A_i$ as follows.
 $$ E = \{ (a_{1,i_1},  \cdots, a_{k, i_k}) \ | \  a_{1, i_1} \in A_1,\   \cdots, \  a_{k,i_k}\in A_k \}.$$
 Suppose that $m_1= \cdots = m_{k} =n$ and there are at least ${{n}\choose{k}} - n^{1+\beta}$ subsets for some $0 \leq \beta < 1$ in $E$.
Then there exists a linear secret sharing scheme for  a  $k$-uniform access structure $\Gamma$, which is determined by $\mathcal{G}(V,E)$,  with total share size $$O(n^{\frac{k^2-3k+2}{k^2-2k+2}+ \left(\frac{k^2-3k+3}{k^2-2k+2}\right)\beta} \log^{k} n ).$$ 
\end{lemma}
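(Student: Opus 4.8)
The plan is to mirror the proof of Lemma~\ref{main:lem2}, but applied to the \emph{complement} of $E$ and invoking the dense partite construction of Lemma~\ref{main2:lem2} in place of the sparse one. First I would pass to the complement: let $\mathcal{U}$ be the universal family of all subsets having exactly one vertex in each part $A_i$, and set $\overline{E} = \mathcal{U} \setminus E$. The density hypothesis $|E| \ge \binom{n}{k} - n^{1+\beta}$ amounts to the statement that $\overline{E}$ contains at most $n^{1+\beta}$ members, so the quantity to control is now the number of members of $\overline{E}$ incident to each vertex of $A_k$ --- exactly the role that the degree in $E$ played in the sparse case.

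Next I would bucket $A_k$ by this complement degree. For $s = 0, 1, \ldots, \log n - 1$, define
\[
{A_k}^{(s)} = \Bigl\{\, v \in A_k \ \Big| \ \tfrac{n}{2^{s+1}} \le \#\{\, f \in \overline{E} : v \in f \,\} \le \tfrac{n}{2^s} \,\Bigr\}.
\]
Since $|\overline{E}| \le n^{1+\beta}$ and each vertex of ${A_k}^{(s)}$ lies in at least $n/2^{s+1}$ members of $\overline{E}$, the same double-counting bound used in Lemma~\ref{main:lem2} gives $|{A_k}^{(s)}| \le 2^{s+1} n^{\beta}$. Moreover, every such vertex lies in at most $d = n/2^s$ members of $\overline{E}$, hence (following the degree convention of Lemma~\ref{main2:lem2}) in at least $n - d$ members of $E$. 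Thus the sub-hypergraph on $A_1 \cup \cdots \cup A_{k-1} \cup {A_k}^{(s)}$, whose first $k-1$ parts still have size $n$, satisfies the hypotheses of Lemma~\ref{main2:lem2} with this value of $d$.

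Applying Lemma~\ref{main2:lem2} to each bucket and taking the union of the resulting schemes over the $\log n$ values of $s$, I would obtain a linear scheme for $\Gamma$ of total share size
\[
O(\log n) \cdot O\!\left( \sqrt[k^2-2k+2]{\,n^{-k+1} (2^{s+1} n^{\beta})^{k^2-3k+3} (n/2^s)^{(k-1)^2}\,}\ \log^{k-1} n \right).
\]
The key observation, exactly as in Lemma~\ref{main:lem2}, is that inside the radical the total exponent of $2^s$ is $-(k-2)s + (k^2-3k+3)$, since $(k^2-3k+3) - (k-1)^2 = -(k-2) \le 0$; hence the powers of two never push the bound above its $s=0$ value, which is a constant for constant $k$. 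The $n$-exponent then collapses to $\tfrac{k^2-3k+2}{k^2-2k+2} + \bigl(\tfrac{k^2-3k+3}{k^2-2k+2}\bigr)\beta$, and the extra $O(\log n)$ from summing the buckets upgrades $\log^{k-1} n$ to $\log^{k} n$, yielding the claimed bound.

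The exponent bookkeeping is identical to the sparse case and is the routine part. The one point that genuinely needs care --- the main obstacle --- is verifying that the \emph{union} of the per-bucket \emph{dense} schemes realizes the access structure $\Gamma$ determined by $E$ (and not by $\overline{E}$): Lemma~\ref{main2:lem2} builds, via the complement construction of Lemma~\ref{r:lem3}, a scheme whose qualified sets are the edges of $E$ meeting the bucket, so I must check that every hyperedge of $E$ is qualified in at least one bucket's scheme while privacy is preserved under the union. This follows because the sets ${A_k}^{(s)}$ partition $A_k$, so each edge of $E$ meets exactly one bucket, but it should be stated explicitly to complete the proof of Lemma~\ref{main2:lem3}.
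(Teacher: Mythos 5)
Your proposal follows essentially the same route as the paper's proof: pass to the complement $\overline{E}$ (of size at most $n^{1+\beta}$), bucket $A_k$ into $\log n$ classes ${A_k}^{(s)}$ by complement degree with $|{A_k}^{(s)}| \le 2^{s+1}n^{\beta}$, apply Lemma~\ref{main2:lem2} to each bucket with $d = n/2^s$, and sum over $s$ to pick up the extra $\log n$ factor. The exponent bookkeeping and the final remark on the union realizing $\Gamma$ match the paper's argument (the latter is left implicit there), so this is correct and not a different approach.
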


\begin{proof}
Let $\mathcal{U}$ be the family of all subsets with exactly one vertex in common with each part $A_i$, where $1\leq i \leq k$. First, let us consider
the complement of a family $E$, which is denoted by $\overline{E}$, consisting of all subsets in the given  universal family $\mathcal{U}$ that are not in $E$.  Since there are  at least ${{n}\choose{k}} - n^{1+\beta}$  subsets for some $0 \leq \beta < 1$ in $E$,  there are at most $n^{1+\beta}$ subsets for some $0 \leq \beta < 1$ in the complement $\overline{E}$. Now we apply Lemma~\ref{main:lem2}.\\

\noindent In the same way of the proof of Lemma~\ref{main:lem2}, let us consider a partition  of the vertices in $A_k$ into $ \log n$ sets according to the number of sets in $\overline{E}$ containing each vertex. Let us define the $s$-th set ${A_{k}}^{(s)}$ as
$${A_{k}}^{(s)} = \{  \ v \in \mathcal{P} \ | \ \frac{n}{2^{s+1}} \leq {\text \ number\ of \ members \ in \ \overline{E} \ containing  \ a \ vertex \ v \ }  \leq \frac{n}{2^s} \}$$
\noindent for $s=0,1, \cdots, \log n -1$. 
Since there are at most $n^{1+\beta}$ subsets for some $0 \leq \beta < 1$ in $\overline{E}$ and
 the  number of  members  in  $\overline{E}$  containing  every vertex
  in the $s$-th set ${A_{k}}^{(s)}$ is at least $\frac{n}{2^{s+1}}$,
we derive that the number of vertices in the $s$-th set ${A_{k}}^{(s)}$ is at most $\frac{n^{1+\beta}}{\frac{n}{2^{s+1}}} = 2^{s+1}n^{\beta}$
for $s=0,1, \cdots, \log n -1$.\\

\noindent For $s=0,1, \cdots, \log n -1$, let us define the family $\overline{E}_s \subseteq \overline{E}$ of subsets with exactly one vertex in common with $A_1,  \cdots, A_{k-1}, {A_k}^{(s)}$ as
\noindent $$ \overline{Q}_s = \{ (a_{1,i_1}, \cdots, a_{r-1, i_{r-1}}, a_{r, i_r})  \ | \  a_{1, i_1} \in A_1,\   \cdots, \  a_{k-1,i_{k-1}} \in A_{k-1},  a_{k,i_k}\in {A_{k}}^{(s)} \} \  \subseteq \overline{E} ,$$
 where $|A_1|=  \cdots = |A_{k-1}| = n $ and $|{A_{k}}^{(s)}| \leq 2^{s+1}n^{\beta}$.
  Let us consider
the complement of the family $ \overline{E}_{s}$, which is denoted by $ E_{s}$, consisting of all subsets in the  universal family that are not in $ \overline{E}_{s}$. \\

\noindent Since every participant in $A_k$ is contained in at most $\frac{n}{2^s}$ in $\overline{E}_s$, every participant in $A_k$ is contained in at least $n -\frac{n}{2^s}$ in $E_s$.
 If we apply Lemma~\ref{main2:lem2} with $E_s$, then we conclude that
there exists a linear secret sharing scheme for  a  $k$-uniform access structure $\Gamma$, which is determined by $E$, with the following total share size 

\begin{align*}
&  O\left(\sqrt[k^2-2k+2]{n^{-k+1}|{A_{k}}^{(s)}|^{k^2-3k+3}d^{(k-1)^2}} \log^{k-1} n \right) \times \log n\\
 =\  & O\left(\sqrt[k^2-2k+2]{n^{-k+1}|2^{s+1}n^{\beta}|^{k^2-3k+3}\left(\frac{n}{2^s}\right)^{(k-1)^2}} \log^{(k-1)} n \right) \times \log n\\
 =\  & O\left(\sqrt[k^2-2k+2]{n^{k^2-3k+2} n^{{\beta}(k^2-3k+3)}}
 \log^{k} n \right).
\end{align*}

\noindent This completes the proof of Lemma~\ref{main2:lem3}.
\end{proof}

\noindent To prove Theorem~\ref{main:mainthm2}, now we utilize 
  the technique of hypergraph decomposition described in Section $4$. Using Lemma~\ref{lemma:hyper},   
$k$-uniform hypergraph is covered by
  $k$-partite $k$-uniform hypergraphs of size $O(\log n)$.
Let us consider the collection of the sets of participants into $k$ parts $A_1^t, \cdots, A_k^t$ for every $ 1 \leq t \leq O(\log n)$. For every $ 1 \leq t \leq O(\log n)$,
let us define the family $E_t \subseteq E$ of subsets with exactly one vertex in common with each $A_i^t$ as 
 $$ E_t = \{ (a_{1,i_1},  \cdots, a_{k, i_k}) \ | \  a_{1, i_1} \in A_1^t,\   \cdots, \  a_{k,i_k}\in A_k^t \},$$
 
 \noindent where $|A_1^t|= \cdots =|A_k^t|$.\\

Applying  Lemma~\ref{main2:lem3} with  $ E_t$,
 we conclude that
there exists a linear secret sharing scheme for  a  $k$-uniform access structure $\Gamma$, which is determined by $E = \bigcup_{t=1}^{O(\log n)} E_t $,  with the following total share size

\begin{align*}
 O\left(\sqrt[k^2-2k+2]{n^{k^2-3k+2} n^{{\beta}(k^2-3k+3)}}
 \log^{k+1} n \right).
\end{align*}

\noindent This completes the proof of Theorem~\ref{main:mainthm2}.

\section{Conclusion}

In this paper,  we investigated efficient constructions 
 on the total share size of linear secret sharing schemes for sparse and dense $k$-uniform access structures (or forbidden $k$-homogeneous access structures)  for a constant $k$ using the hypergraph decomposition technique and  the monotone span programs.\\
 
 An access structure is ideal if there exists an ideal secret sharing scheme  realizing it. The characterization of the ideal access structures is one of the important problems in the secret sharing scheme. The characterization problems of ideal  access structures  have been studied by many authors~\cite{CT,FP1,FP2,JZB,DS1,TD}. An open problem is the search for new techniques to characterize the ideal $k$-homogeneous access structures.

\end{document}